\newtheorem{theorem}{Theorem}[section]
\newtheorem{proposition}[theorem]{Proposition}
\newtheorem{lemma}[theorem]{Lemma}
\newtheorem{corollary}[theorem]{Corollary}
\newtheorem*{namedtheorem}{\theoremname}
\newcommand{\theoremname}{testing}
\newenvironment{named}[1]{\renewcommand{\theoremname}{#1}\begin{namedtheorem}}{\end{namedtheorem}}
\theoremstyle{definition}
\newtheorem{definition}[theorem]{Definition}
\newtheorem{conjecture}[theorem]{Conjecture}
\newtheorem{remark}[theorem]{Remark}
\newtheorem{question}[theorem]{Question}
\title[Unexpected essential surfaces]{Unexpected essential surfaces among exteriors of twisted torus knots}
\author{Thiago de Paiva}
\address[]{School of Mathematics, Monash University, VIC 3800, Australia }
\email[]{thiago.depaivasouza@monash.edu}
\begin{document}

\begin{abstract}
The twisted torus knots $T(p, q; r, s)$ are obtained by performing a sequence of $s$ full twists on $r$ adjacent strands of $(p, q)$-torus knots. Morimoto asked whether all twisted torus knots
with essential tori in the exterior fit into one of two families. We prove that the answer to this question is no, by finding two  different new families of toroidal twisted torus knots.
\end{abstract}

% Put title, author, abstract at the front
\maketitle

\section{Introduction}

Twisted torus knots were introduced by Dean in his doctoral thesis \cite{Thesis}. He was interested in discovering when  Dehn surgeries on knots produce Seifert fibered spaces. Since then, twisted torus knots have become a rich source of research. They are known to be among those knots built with the least amount of tetrahedra \cite{simplest, nextsimplest}.
Their volumes ~\cite{generalizedtwistedtoruslinks}, knot Floer homology \cite{homology}, and bridge spectra ~\cite{Bridge}, and Heegaard splittings~\cite{Heegaard} have been studied.
In this paper, we study essential surfaces in their complements.
            
A \emph{twisted torus knot} is determined by four integers $p, q, r, s$. The knot $T(p, q; r, s)$ is obtained by 
twisting $r$ strands of the $(p, q)$-torus knot a total of $s$ full times.

A nontrivial knot is either \emph{hyperbolic}, a \emph{torus knot}, or a \emph{satellite knot} \cite{Thurston}. We say this is its geometric type or knot type.

There has been a significant effort to classify the geometric types of twisted torus knots. In 2009, Morimoto and Yamada studied essential tori in their exteriors. They showed that when $p \equiv 1$ mod $q$ and $q$ divides $r$, the twisted torus knot $T(p, q; r, s)$ has an essential torus in its complement \cite{Anote}.

This result was generalized by Lee, who proved the following theorem \cite{cable}.

\begin{theorem}[Lee]\label{Lee1}
Let $p, q$ be coprime integers with $1 < q < p$, and let $k$ be an integer
with $1 < kq < p$. Then  $T(p, q; kq, s)$ is the $(q, p+k^2qs)$-cable knot on the torus knot
$T(k, ks+ 1)$ for any integer $s$.
\end{theorem}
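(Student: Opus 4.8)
The plan is to realize $T(p,q;kq,s)$ explicitly as a curve carried by the boundary of a neighborhood of a torus knot, and then to read off the two cable parameters by a homological computation. Place $K_0=T(p,q)$ on the standard Heegaard torus $T$ of $S^3$, which bounds solid tori $V_1$ and $V_2$, so that $K_0$ winds $p$ times in the $V_1$-longitudinal direction $\theta$ and $q$ times in the meridional direction $\phi$. The twist is performed along a disk $D$ that meets $K_0$ in the $r=kq$ chosen strands, so that $s$ full twists along $\bdy D$ yield $T(p,q;kq,s)$. The crucial point, and the reason the hypothesis $q\mid r$ is needed, is that the $kq$ parallel strands through $D$ split into $k$ consecutive bundles of $q$ strands each.

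First I would identify the companion solid torus. The claim is that the $k$ bundles are carried by a solid torus $W$ whose core $J$ meets a meridian disk of $V_1$ in $k$ points (one per bundle) and a meridian disk of $V_2$ in a single point, so that $J$ is the $(k,1)$-curve on $T$; being a longitude of $V_2$, this $J$ is unknotted and $W$ is isotopic to $V_2$. In this normal form $K_0$ lies on $\bdy W$ and, writing $[K_0]$ in the basis given by the longitude $(k,1)$ and meridian $(1,0)$ of $W$, one finds $[K_0]=q\lambda_W+(p-kq)\mu_W$; thus $K_0$ winds $q$ times around $J$, and before twisting it is the $(q,p)$-cable of the unknot (in the $0$-framing), namely $T(p,q)$ itself, which is the $s=0$ base case. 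The geometric heart of the argument is to arrange $D$ so that it meets $J$ in exactly $k$ points and meets $W$ in exactly $k$ meridian disks, each carrying $q$ strands; I expect \emph{this} step, rather than any later calculation, to be the main obstacle, since it requires a careful description of how the bundles sit inside $W$ and of the position of $D$ relative to $W$, and it is precisely where $q\mid r$ is used.

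Granting this normal form, the $s$ full twists along $D$ are read off on two levels. On the companion core, $J$ meets $D$ in $k$ points, so twisting it as a $k$-strand braid sends the closure of $(\sigma_1\cdots\sigma_{k-1})$ to the closure of $(\sigma_1\cdots\sigma_{k-1})^{1+ks}$; that is, $T(k,1)$ becomes $T(k,ks+1)$. On the pattern, because $D$ meets $W$ in meridian disks, the twist preserves the property of being a cable and merely re-embeds $W$ in $S^3$ while altering the framing. Hence the companion of $T(p,q;kq,s)$ is $T(k,ks+1)$ and its winding number about this companion is still $q$.

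Finally I would compute the meridional parameter. With respect to the $0$-framing of the companion, the cabling slope equals the linking number $m=\operatorname{lk}\bigl(T(p,q;kq,s),J\bigr)$, since for a $(q,m)$-cable one has $\operatorname{lk}=q\cdot 0+m\cdot 1=m$. Before twisting, $\operatorname{lk}(T(p,q),J)=p$, because $J$ is isotopic to the core of $V_2$. As $K_0$ and $J$ pass through $D$ algebraically $kq$ and $k$ times respectively, performing $s$ full twists along $\bdy D$ changes this linking number by $s\cdot(kq)\cdot k=k^2qs$, giving $m=p+k^2qs$. Combined with the identification of the companion, this shows that $T(p,q;kq,s)$ is the $(q,p+k^2qs)$-cable of $T(k,ks+1)$. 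The consistency checks $s=0$ (the cable of the unknot is $T(p,q)$) and $k=1$, $q=2$ (where the braid $\sigma_1^{p}\cdot\sigma_1^{2s}$ closes to $T(2,p+2s)$, matching $m=p+2s$), confirm the arithmetic.
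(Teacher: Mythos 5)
Your reduction is set up the right way, but there is a genuine gap, and it sits exactly where you predicted it would: the ``normal form'' is never established, and it is not a side lemma --- it essentially \emph{is} the theorem. (For context: the paper does not prove Theorem~\ref{Lee1} at all; it is imported from \cite{cable}. The closest in-house argument is Lemma~\ref{toruslemma}, which is the case $k=1$, proved by a surgery description in which the twisting circle is carried through the meridian--longitude exchange and then isotoped, in the complement of the knot, until it encircles all $q$ strands of $T(q,p)$.) Your later steps are correct and routine once the normal form is granted: twisting along a disk that the core $J$ meets coherently in $k$ points turns $T(k,1)$ into $T(k,ks+1)$; since $\partial D$ is disjoint from $W$, the twist re-embeds $W$ and preserves the winding number $q$; and the cabling slope is the linking number with the core, which changes by $s\cdot(kq)\cdot k$ under $s$ full twists, giving $p+k^2qs$. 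But producing a solid torus $W$ with unknotted core $J$ such that $K_0\subset\partial W$ \emph{and} the twisting disk $D$ meets $W$ in exactly $k$ meridian disks of $q$ strands each is the entire geometric content, and ``granting this normal form'' followed by homological bookkeeping is not a proof of it.

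Worse, the normal form as you literally state it cannot hold. You keep $K_0$ as the $(p,q)$-curve on the standard Heegaard torus $T$ and take $J$ to be the $(k,1)$-curve on the same $T$. But these two curves on $T$ have algebraic intersection number $p\cdot 1-q\cdot k=p-kq>0$, so $K_0$ cannot be made disjoint from $J$ while staying on $T$, let alone lie on $\partial N(J)$. Homologically: any curve on $\partial N(J)$ with winding number $q$ about $J$ has linking number $qk$ with the core of $V_2$, whereas the $(p,q)$-curve on $T$ links that core $p\neq qk$ times ($q\nmid p$). So the true statement must concern an isotopy of the \emph{pair} $(K_0,D)$: one must move $K_0$ off the Heegaard torus, regroup its strands into $k$ bundles of $q$, and --- this is the delicate part --- track the twisting disk through this isotopy so that it ends up meeting the new solid torus in $k$ meridian disks. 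That is precisely what Lemma~\ref{toruslemma} accomplishes for $k=1$ (the disk becomes a meridian disk of $V_2$), and what Lee's proof in \cite{cable} does for $k>1$. Your framing arithmetic is internally consistent --- $q\lambda_W+(p-kq)\mu_W$ in the surface framing is indeed $q\lambda_0+p\mu$ in the $0$-framing, forcing $\operatorname{lk}(K_0,J)=p$ --- but consistency of the answer is not a construction, so the proof as written does not go through.
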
 

For $| s | \geq 2$, Lee further proved that the knot types of the twisted torus knots $T(p, q; r, s)$ when $r \neq p$ and $r$ is
not a multiple of $q$ are hyperbolic or torus knots (see \cite{LeeTorusknotsobtained}, \cite{hyperbolicity}).
Thus, the geometric types in the case $| s | > 1$ are characterized, but the case $| s | = 1$ remains open. 
 
In addition to the satellite knots of theorem~\ref{Lee1}, Morimoto proved the following theorem \cite{tangle}.

\begin{theorem}[Morimoto]\label{Morimoto1}
Let $e > 0$, $k_1 > 1$, $k_2 > 1$, $x_1 > 0$, $x_2 > 0$ be integers with $gcd(x_1$, $x_2) = 1$. Put
$$p = ((e + 1)(k_1 + k_2 - 1) + 1)x_1 + (e + 1)x_2,$$
$$q = (e(k_1 + k_2 - 1) + 1)x_1 + ex_2,$$
$$r = ((e + 1)(k_1 + k_2 - 1) - k_1 + 2)x_1 + ex_2 \textrm{ and }$$ 
$$s = -1.$$
Then, the twisted torus knot $T(p, q; r, s)$ has an essential torus in the exterior whose companion is the torus
knot $T(k_2, -(e + 1)k_2 - 1)$.

\end{theorem}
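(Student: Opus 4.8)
The plan is to exhibit the essential torus directly as the boundary of a knotted solid torus $V$ that contains $K = T(p,q;r,s)$ as a satellite, and then to identify the core of $V$ with the claimed companion $C = T(k_2, -(e+1)k_2 - 1)$. The observation that motivates the whole construction is that the defining quantities can be rewritten in terms of $d := p - q = (k_1+k_2-1)x_1 + x_2$ and $x_1$ as $p = (e+1)d + x_1$, $q = ed + x_1$, and $r = ed + (k_2+1)x_1$, so that in particular $q < r < p$ and $r - q = k_2 x_1$. This splits the $r$ twisted strands into a block of $q$ strands carrying the underlying torus-knot winding and a block of $k_2 x_1$ extra strands; it is this second block, together with the single negative full twist ($s=-1$), that will wrap around the companion $k_2$ times.

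First I would put $K$ in standard position on the genus-two Heegaard surface of $S^3$: realize $T(p,q)$ as a curve on the standard torus $\hat T$ and perform the $s=-1$ full twist in a disk $D$ meeting $K$ in $r$ points. I would then isotope a neighborhood of $D$ together with the $k_2 x_1$ extra strands so that $K$ comes to lie inside a solid torus $V$ whose core runs, inside $S^3$, as a torus knot. Because the $q$ strands carry the underlying $(p,q)$-pattern while the extra $k_2 x_1$ strands are bundled into $k_2$ groups of $x_1$ by the twist, the core of $V$ should run $k_2$ times in the longitudinal direction and, counting the torus-knot winding (contributing $(e+1)$ per group) together with the single negative full twist, pick up exactly $-(e+1)k_2 - 1$ meridional crossings; this is the computation that forces the companion to be $T(k_2, -(e+1)k_2-1)$ rather than a torus knot of some other slope.

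With the torus $T = \partial V$ in hand, I would verify that it is essential in the exterior of $K$ by the usual satellite criterion. On the outside, $W = S^3 \setminus \mathrm{int}\, V$ is the exterior of the nontrivial torus knot $C$, so $\partial V$ is incompressible there. On the inside I must check that the pattern $P = K \subset V$ is geometrically essential, i.e.\ not contained in a $3$-ball in $V$ and not isotopic to the core; the hypotheses $k_1 > 1$, $k_2 > 1$ and $x_1, x_2 > 0$ (which make $d$, and hence $P$, genuinely nontrivial) are exactly what guarantee this, so $\partial V$ is incompressible in $V \setminus P$ as well. Together these show $T$ is incompressible and not boundary-parallel, hence essential, and that $K$ is a satellite with companion $C$. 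The condition $\gcd(x_1, x_2) = 1$ enters to ensure the resulting object is a single knot.

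The main obstacle is the middle step: the explicit isotopy identifying the satellite $(V, P) \hookrightarrow S^3$ with $T(p,q;r,s)$ and, simultaneously, pinning down the companion slope. This is a careful bookkeeping of strand counts and full twists across the disk $D$, and the delicate point is that the single full twist distributes over the two blocks of strands in just the right way to reproduce the clean formulas for $p$, $q$, and $r$; verifying the meridional count $-(e+1)k_2 - 1$ is where the interaction between the torus-knot winding and the twist region must be tracked most carefully. Once the diagrammatic identification is complete, the incompressibility argument above is routine.
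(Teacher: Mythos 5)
This statement is not proved anywhere in the paper: it is imported verbatim as Morimoto's theorem (cited to his paper on tangle decompositions), so there is no internal proof to compare against, and your proposal has to stand on its own. It does not. Your arithmetic normalization is correct and is a sensible start: with $d = p - q = (k_1+k_2-1)x_1 + x_2$ one indeed has $p = (e+1)d + x_1$, $q = ed + x_1$, $r = ed + (k_2+1)x_1$, hence $q < r < p$ and $r - q = k_2 x_1$; and $\gcd(x_1,x_2)=1$ does give $\gcd(p,q)=1$. But everything beyond that is a plan, not an argument. The entire content of the theorem is the existence of the solid torus $V$ containing $T(p,q;r,-1)$ whose core is the specific torus knot $T(k_2, -(e+1)k_2-1)$; that is exactly the ``middle step'' you explicitly defer as ``the main obstacle.'' The meridional count $-(e+1)k_2-1$ is asserted (``should \dots pick up exactly''), never computed, and the incompressibility discussion, while a standard criterion, is entirely conditional on that missing construction. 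The claim that the hypotheses $k_1>1$, $k_2>1$, $x_1,x_2>0$ are ``exactly what guarantee'' geometric essentialness of the pattern is likewise unsupported by any argument.

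There is also a concrete reason to distrust the geometric picture you set up. As this paper itself records (in the sentence discussing Lee's generalization), the knots of this theorem are \emph{composite}: they are connected sums having the torus knot $T(k_2,-(e+1)k_2-1)$ as a summand, and the essential torus of the statement is the swallow-follow torus around that summand, inside which the knot has winding number one. Morimoto's actual argument produces this connected-sum (tangle) decomposition. Your sketch, in which the block of $r-q = k_2x_1$ extra strands ``wraps around the companion $k_2$ times,'' points instead toward a satellite structure of higher winding number, more like the cabling phenomena in the rest of this paper; reconciling that picture with the true winding-number-one, connected-sum structure is precisely the bookkeeping you have not done, and as described it is more likely to collapse than to go through. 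To repair the proof you would need either to exhibit Morimoto's decomposing sphere (showing the knot is a connected sum and identifying the summand), or to carry out in full the isotopy and meridian-disk count that your proposal postpones.
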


Morimoto asked:

\begin{question}\label{questionMorimoto}
Are there twisted torus knots with essential tori which are not as in theorem~\ref{Lee1} or in theorem~\ref{Morimoto1}?

\end{question}

Using the same argument of Morimoto, Lee found another family of composite knots that can be seen as a slight generalization of the composite knots of theorem~\ref{Morimoto1} (when $x_1 = 1$, $k_1' = k_1 + x_2 - 1$, and $k_2' = k_2$) \cite{Composite}.

In this paper, we show that the answer to the question~\ref{questionMorimotoYamada} is certainly \emph{yes}.

Our main results are the followings.

\begin{theorem}\label{answerMorimoto}
For any integer $s> 0$, the twisted torus knot $$T(5 + 4(s - 1), 4; 2, 1)$$ is the $T(2, 5 + 4(s - 1))$-cable knot on the torus knot $$T(2, 3 + 2(s - 1)).$$

\end{theorem}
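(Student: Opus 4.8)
The plan is to exhibit the claimed satellite structure directly at the level of braids, by recognizing the twisted torus knot as a $2$-cable of a $2$-braid. First I would replace the defining braid on $p=4s+1$ strands by the more convenient presentation on $q=4$ strands: using the standard fact that for $r\le\min\{p,q\}$ the twist region on $r$ strands may be placed on the $q$-strand torus braid, the knot $T(4s+1,4;2,1)$ is the closure of the positive $4$-braid
\[
\beta=(\sigma_1\sigma_2\sigma_3)^{4s+1}\,\sigma_1^{2}.
\]
As a consistency check I would compute its Seifert genus from the positive-braid (Bennequin) formula $2g-1=c-n$, with $c=12s+5$ crossings and $n=4$ strands, obtaining $g=6s+1$; the same value must emerge from the cable description, which both guides and confirms the identification.

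Next I would split the four strands into the two blocks $\{1,2\}$ and $\{3,4\}$ and read $\beta$ as a $(2\times 2)$-cable. Writing $(\sigma_1\sigma_2\sigma_3)^{4s+1}=(\Delta_4^2)^{s}\,(\sigma_1\sigma_2\sigma_3)$, where $\Delta_4^2=(\sigma_1\sigma_2\sigma_3)^4$ is the full twist on four strands, I would apply the cabling formula for the full twist, $\Delta_4^2=\widehat{\Delta}_2^{2}\,\sigma_1^2\sigma_3^2$, which separates it into a full twist $\widehat{\Delta}_2^{2}$ of the two blocks around each other together with one full twist inside each block (a crossing count, $12=8+2+2$, confirms this). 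Collecting the inter-block contributions turns the core of the cable into the $2$-braid $\widehat{\sigma}_1^{\,2s+1}$, whose closure is the companion torus knot $T(2,2s+1)$, while the intra-block twists coming from $(\Delta_4^2)^s$, the leftover $\sigma_1\sigma_2\sigma_3$, and the defining twist $\sigma_1^2$ accumulate to the $(2,4s+1)$ pattern on the boundary of a neighborhood of that companion. The framing here is the one induced by the torus diagram rather than the Seifert framing; this is exactly why the pattern parameter reads $4s+1$ and not the $0$-framed value $8s+3$, which differs by $2(2s+1)$ and is precisely what restores the genus $6s+1$.

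Finally I would record that the cabling torus is genuinely essential: the companion $T(2,2s+1)$ is a nontrivial knot and the pattern has winding number $2$ and is not isotopic into a ball or onto the core, so the torus is incompressible and knotted, hence gives a cable structure. I would also note that this family is covered neither by Theorem~\ref{Lee1} (there $r$ is a multiple of $q$, whereas here $r=2$ is not a multiple of $q=4$) nor by Theorem~\ref{Morimoto1} (negative twisting and a different companion), which is what makes it a genuine answer to Question~\ref{questionMorimoto}.

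The main obstacle is the middle step. Because the torus winding $4s+1$ is odd and the defining twist contributes a single, lone full twist $\sigma_1^{2}$, the decomposition of $\beta$ into a clean cable is not a purely algebraic identity in the $4$-strand braid group: the factor $\sigma_1\sigma_2\sigma_3$ is a $4$-cycle rather than a block transposition, so passing from it to an honest inter-block crossing $\widehat{\sigma}_1$ requires an explicit isotopy on the torus while carefully bookkeeping which crossings are pushed inside the two blocks. Getting this count exactly right, so that the core closes to $\widehat{\sigma}_1^{\,2s+1}=T(2,2s+1)$ and the residual intra-block twisting assembles precisely into the $(2,4s+1)$ pattern, is where the real work lies, and the genus identity $g=6s+1$ serves as the arithmetic check that the bookkeeping has been carried out correctly.
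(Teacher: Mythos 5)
Your reduction to the $4$-braid $\beta=(\sigma_1\sigma_2\sigma_3)^{4s+1}\sigma_1^2$, the factorization $(\sigma_1\sigma_2\sigma_3)^{4s+1}=(\Delta_4^2)^s(\sigma_1\sigma_2\sigma_3)$, the cabling identity $\Delta_4^2=\widehat{\Delta}_2^2\sigma_1^2\sigma_3^2$, and your framing observation (the theorem's parameter $4s+1$ is measured in the blackboard framing $2s+1$ of the companion's braid diagram, so the Seifert-framed parameter is $8s+3$, which is what makes the genus $2s+(8s+2)/2=6s+1$ come out right) are all correct, and this is a genuinely different route from the paper's. But the proposal has a real gap, which you name yourself: converting the leftover factor $(\sigma_1\sigma_2\sigma_3)\sigma_1^2$ into one inter-block crossing plus one intra-block crossing is exactly the content of the theorem, and it is never carried out. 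Everything you do execute is formal bookkeeping that would be equally available if the claim were false for these parameters, and the genus identity cannot stand in for the missing step: genus does not detect satellite structure (it cannot distinguish the claimed cable from a hyperbolic knot of genus $6s+1$), so it is a consistency check, not a proof. As written, this is an outline with its decisive step deferred.

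The gap is closable, and more cheaply than you predict: contrary to your remark that the step ``is not a purely algebraic identity in the $4$-strand braid group,'' it essentially is. Setting $\widehat{\sigma}_1=\sigma_2\sigma_3\sigma_1\sigma_2$ (the block crossing), the commutation $\sigma_1\sigma_3=\sigma_3\sigma_1$ and one braid relation give $\sigma_1\sigma_2\sigma_3\sigma_1^2=\sigma_1\sigma_2\sigma_1\sigma_3\sigma_1=\sigma_2(\sigma_1\sigma_2\sigma_3\sigma_1)$, which is conjugate (by cycling) to $(\sigma_1\sigma_2\sigma_3\sigma_1)\sigma_2=\sigma_1\widehat{\sigma}_1$, hence to $\widehat{\sigma}_1\sigma_1$. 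Since $(\Delta_4^2)^s$ is central, $\beta$ is conjugate to the block-preserving braid $\widehat{\Delta}_2^{2s}\,\sigma_1^{2s}\sigma_3^{2s}\,\widehat{\sigma}_1\,\sigma_1$, whose closure is manifestly a $2$-cable: the induced $2$-braid is $\sigma_1^{2s+1}$, giving companion $T(2,2s+1)$, and the $4s+1$ intra-block crossings give the $(2,4s+1)$ pattern in the diagram framing, after which essentiality follows as you say. For comparison, the paper takes an entirely different path: it applies Proposition~\ref{proposition1} to reach $K((6,2),(4,3+4(s-1)))$, uses the surgery/half-twist argument of Lemma~\ref{lemma3} to trade the awkward factor $\sigma_1^2$ for $\sigma_1\sigma_2$, i.e.\ to reach $K((4,5+4(s-1)),(3,1))$, and only then reads off the cable by the tube-following argument of Lemma~\ref{lemma4}. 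Your braid-algebraic route, once completed as above, is arguably shorter and more self-contained; but in its submitted form the step that actually proves the theorem is missing.
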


\begin{theorem}\label{theorem5}
Let $p = ka+2$ and $q = kb+1$ for integers $k>1$, $b>0$ and $a = 2b + 1$. Then, the twisted torus knot $$T(p, q; p - 1, - 1)$$ is the $T(k, p - 1 + kb)$-cable knot on the $K$-knot $$K((a, 2), (a - 1, 1), \dots, (b + 2, 1), (b, 1), (b-1, 1), \dots, (2, 1)).$$ 
\end{theorem}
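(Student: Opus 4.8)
The plan is to realize the twisted torus knot as a curve on a genus-two Heegaard surface and then track how that curve sits inside the solid-torus neighborhoods, following the strategy already used to prove Theorem~\ref{Lee1} and Theorem~\ref{Morimoto1}. The first step is to fix a good combinatorial model: since $s=-1$ here, the full twist on $p-1$ strands can be encoded as a negative full twist region, and one represents $T(p,q;p-1,-1)$ as the closure of a braid obtained from the torus braid $(\sigma_1\cdots\sigma_{p-1})^q$ by inserting one negative full twist $(\sigma_1\cdots\sigma_{p-2})^{-(p-1)}$ on the first $p-1$ strands. Because $r=p-1$ is only one less than $p$, this twist region is almost the full set of strands, which is exactly the regime where cabling phenomena are expected; I would make the arithmetic relation $a=2b+1$, $p=ka+2$, $q=kb+1$ explicit at this stage so that the later identification of the pattern and companion comes out cleanly.

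The second step is to identify the companion. The $K$-knot notation $K((a,2),(a-1,1),\dots,(2,1))$ suggests an iterated construction in which each pair $(m,j)$ records a torus-knot or twisting datum, and I would interpret it via the ``$K$-knot'' machinery as an iterated torus or twisted-torus curve whose successive blocks are read off from the strand count. My approach would be to show, step by step, that collapsing the twist region of $T(p,q;p-1,-1)$ onto the core of the twisting solid torus produces precisely this $K$-knot, and that a regular neighborhood of that core is an unknotted solid torus in which the remaining curve is a $(k,\,p-1+kb)$ torus knot. Concretely I would exhibit an essential torus $\mathcal{T}$ in the exterior bounding, on its inner side, a solid torus whose core is the companion $K$-knot, and check that the knot lies on $\bdy N$ of that solid torus as the claimed cable.

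The third and decisive step is the homological slope computation. Having produced the essential torus, I must verify two things: that the knot traverses the companion solid torus $k$ times (giving the first cabling coordinate $k$), and that its winding/framing against the longitude is exactly $p-1+kb$. The first follows from the $r=p-1$ strands being grouped into $k$ sub-bundles under the relation $p=ka+2$; the second is the genuine bookkeeping, where I would compute the linking number between the knot and the core curve by summing the contribution of the $q$ torus-braid crossings and the one negative full twist, and show it collapses, using $a=2b+1$ and $q=kb+1$, to $p-1+kb$. I expect this linking-number reconciliation to be the main obstacle, because the negative full twist on $p-1$ strands contributes a large correction term and one must see the near-cancellation that leaves the stated clean slope.

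Finally, I would confirm essentiality of $\mathcal{T}$ — that it is incompressible and not boundary-parallel — which here reduces to checking that the companion $K$-knot is nontrivial (guaranteed by $k>1$ and the nontrivial block structure $a=2b+1>1$) and that the pattern is not isotopic into $\bdy N$, i.e.\ that the cabling coordinate $k>1$ is genuinely greater than one. Together these identify $T(p,q;p-1,-1)$ as the asserted $(k,\,p-1+kb)$-cable on the $K$-knot and, since this companion is visibly neither a torus knot as in Theorem~\ref{Lee1} nor of the form in Theorem~\ref{Morimoto1}, furnish the new toroidal family.
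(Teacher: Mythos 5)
Your outline is not a proof but a restatement of what must be shown, and at the two places where it does commit to a mechanism, the mechanism is wrong. The heart of the theorem is identifying the companion as the specific $K$-knot $K((a,2),(a-1,1),\dots,(b+2,1),(b,1),\dots,(2,1))$ --- note the skipped term $(b+1,1)$ --- and your proposal's only suggestion for this is that ``collapsing the twist region of $T(p,q;p-1,-1)$ onto the core of the twisting solid torus produces precisely this $K$-knot.'' No tube around the $r=p-1$ twisted strands can play this role: in the paper the companion does not come from the twist region at all, but from a meridian circle $C$ of the solid torus $V$ carrying the torus link $T(p-1,k)$, after an inside-outside exchange in which $\partial V$ is re-read as the boundary of a tubular neighborhood of $C$ (so that $T(p-1,k)$ becomes the pattern $T(k,p-1)$), and the skipped $(b+1,1)$ term is produced by a positive half twist on $a$ strands, a \emph{negative} half twist on $b+1$ strands, and a positive half twist on $b$ strands of $C$, whose pattern-side contributions cancel to $a-1=2b$ half twists, i.e.\ $b$ full twists, giving the slope $p-1+kb$. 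Your outline contains no step that could generate this three-circle structure, and without it the companion and the slope are simply asserted.

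Second, your argument for the cabling coordinate $k$ is arithmetically false: you claim it ``follows from the $r=p-1$ strands being grouped into $k$ sub-bundles under the relation $p=ka+2$,'' but $p-1=ka+1$ is not divisible by $k$, so no such grouping exists. (Contrast Lee's Theorem~\ref{Lee1}, where $r=kq$ really does split into $k$ bundles of $q$ strands; that is exactly why that strategy does not transfer to $r=p-1$.) In the paper, the $k$ arises from re-reading $T(p-1,k)$ as $T(k,p-1)$ under the solid-torus exchange, not from partitioning the twist region. Finally, you skip the step that makes any of this tractable: the paper first converts $T(p,q;p-1,-1)$, which has a negative full twist, into a \emph{positive} $K$-knot presentation $K((p-1,k),(ka,1),\dots,(k(1+b)+1,1),(kb,1),\dots,(2,1))$ by explicit braid moves (Lemma~\ref{lemmaSymmetry}); only that presentation can be realized as half-twist surgeries on the three circles $J_1,J_2,J_3$. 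A linking-number computation, which you propose as the decisive step, can at best verify the slope once the essential torus and companion are in hand; it cannot produce them.
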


Morimoto and Yamada realized the difficulty of finding twisted torus knots $T(p, q; r, s)$ which have a closed essential surface in their exterior when $r$ is a prime number. Therefore, they asked \cite{Anote}: 

\begin{question}\label{questionMorimotoYamada}
Does there exist twisted torus knot $T(p, q; r, s)$ containing a closed essential surface in its exterior for a prime number $r > 2$?
\end{question}

For $|s|\geq 2$, the answer to the question~\ref{questionMorimotoYamada} was confirmed to be no; see \cite{hyperbolicity}. Lee showed that essential tori appear only when $r=kq$ is a nontrivial multiple of $q$; see \cite{cable}.

Theorem~\ref{Morimoto1} provides some parameters that answer the question above, as well as the following corollary of theorem~\ref{theorem5}.

\begin{corollary}\label{answerMorimotoYamada}
There are integers $1< q < p$, $s=-1$, and prime numbers $r > 2$ such that the twisted torus knots $T(p, q; r, s)$ in theorem~\ref{theorem5} contain an essential torus in their exteriors. 
\end{corollary}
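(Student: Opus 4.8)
The plan is to read the corollary off Theorem~\ref{theorem5}, which already exhibits $T(p,q;p-1,-1)$ as a cable knot, and then to select the parameters appropriately. Recall the standard fact that if a knot $K$ is the $(m,n)$-cable of a companion $C$ with $|n|\ge 2$ and $C$ nontrivial, then the boundary of a regular neighborhood of $C$ is an incompressible, non-boundary-parallel torus in the exterior of $K$; in other words $K$ carries an essential torus. In Theorem~\ref{theorem5} the cabling slope is that of $T(k,p-1+kb)$ with $k>1$, so the cable wraps $k\ge 2$ times around the companion, and the cabling torus will be essential \emph{provided} the companion $K$-knot is a nontrivial knot. Thus the whole corollary reduces to two independent tasks: (i) choosing $k,b$ so that $r=p-1$ is an odd prime while $1<q<p$ holds, and (ii) verifying that the companion $K$-knot furnished by Theorem~\ref{theorem5} is nontrivial.

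For task (i), I would first note that the defining relations force $p=2q+k$, whence $\gcd(p,q)=\gcd(2q+k,q)=\gcd(k,kb+1)=1$ and $1<q<p$ hold automatically for every admissible pair $(k,b)$; in particular $T(p,q)$ is a genuine torus knot, so the twisted torus knot is well defined. It then remains to make $r=p-1=k(2b+1)+1$ a prime larger than $2$, which is a purely arithmetic condition. A single example already suffices for an existence statement: taking $b=2$, $a=5$, $k=2$ gives $p=12$, $q=5$, and $r=11$, an odd prime with $1<5<12$. To get infinitely many such knots one fixes $b$ and lets $k$ range over the even integers, so that $r=(2b+1)k+1$ is odd and runs through an arithmetic progression whose first term is coprime to its common difference $2(2b+1)$; Dirichlet's theorem then supplies infinitely many primes $r>2$.

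For task (ii), which I regard as the only delicate point, I would unwind the notation $K((a,2),(a-1,1),\dots,(b+2,1),(b,1),\dots,(2,1))$ and identify its innermost companion as the nontrivial torus knot determined by the leading pair $(a,2)$ with $a=2b+1\ge 3$. Since a knot obtained by (iterated) cabling on a nontrivial knot is again nontrivial, the $K$-knot is nontrivial for all admissible parameters; this is precisely what rules out the degenerate scenario in which the companion is unknotted and $T(p,q;p-1,-1)$ is merely a torus knot with atoroidal exterior. Combining the two tasks, for the chosen parameters Theorem~\ref{theorem5} presents $T(p,q;p-1,-1)$ as a $k$-fold cable, with $k\ge 2$, of a nontrivial knot, so its exterior contains an essential torus, while simultaneously $r=p-1$ is a prime greater than $2$ and $1<q<p$. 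This establishes the corollary, and the Dirichlet argument shows the phenomenon occurs for infinitely many such twisted torus knots.
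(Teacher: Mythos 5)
Your proposal follows the same route as the paper: the corollary is read off Theorem~\ref{theorem5}, and one only has to choose parameters making $r=p-1$ an odd prime. Your example $T(12,5;11,-1)$ (that is, $k=2$, $b=2$) is literally one of the five examples the paper lists (all with $k=2$), and your Dirichlet-progression argument for infinitely many such primes is a pleasant extra that the paper does not claim. The one point where you go beyond the paper is task (ii), checking that the companion $K$-knot is nontrivial so that the cabling torus is genuinely essential; the paper glosses over this, simply declaring the knots satellites. Your instinct there is right, but your justification is not: nothing in the paper shows that $K((a,2),(a-1,1),\dots,(2,1))$ is an iterated cable, and the heuristic that the leading pair $(a,2)$ supplies the \emph{innermost} companion runs backwards relative to how cabling actually arises for these braids --- in Theorem~\ref{Lee1}, for instance, it is the \emph{smaller} parameter (the twisting region) that produces the companion $T(k,ks+1)$, while the larger one produces the pattern. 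Fortunately the fact you need is immediate from positivity: the companion is the closure of a positive braid on $a$ strands with $c\geq 2(a-1)>a-1$ crossings, and for a positive braid closure Seifert's algorithm realizes the genus, namely $(c-a+1)/2>0$, so the companion cannot be the unknot. With that substitution your argument is complete and is essentially the argument of the paper, made slightly more careful.
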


There has been some confusion in the literature whether twisted torus knots $T(p, q; 2, s)$ can contain essential tori in their exteriors (see theorem 1.1 of \cite{MorimotoEssential}). Theorem~\ref{answerMorimoto} proves such knots do exist.

Our method of proof is to generalise the definition of twisted torus knots to a wider class, and to find
conditions under which these are satellites with implications to twisted torus knots.

\subsection{Acknowledgment}I would like to thank my supervisor, Jessica Purcell, for the guidance and Sangyop Lee for helpful comments and one figure.
 
\section{Generalised twisted torus links}

In this section, we define a family of links, called generalised twisted torus links, given by
the closures of some positive braids, and establish some symmetries between
them in twisted torus knots. 

\begin{definition}Let $r_1, \dots, r_n, s_1, \dots, s_n$ be integers such that $r_1 > \dots > r_n > 1$ and all $s_i > 0$. The generalised twisted torus link with parameters $((r_1, s_1), \dots, (r_n, s_n))$ is defined to be the closure of the following braid
$$(\sigma_1\dots \sigma_{r_1 - 1})^{s_1}(\sigma_1\dots \sigma_{r_2 - 1})^{s_2} \dots (\sigma_1\dots \sigma_{r_n - 1})^{s_n}.$$
Here $\sigma_i$ is a standard generator of the braid group, giving a positive crossing between the $i$-th and $(i+1)$-st strands.
We denote this link by $K((r_1, s_1), (r_2, s_2), \dots, (r_n, s_n))$ and call it  a $K$-link for short.

\end{definition}

$K$-links are a natural extension of twisted torus knots $T(p, q; r, s)$ when $r < p$. 

The twisted torus knot $T(p, q; r, s)$ is equal to the $K$-knot $K((p, q), (r, rs))$ for $s > 0$.

\begin{remark}The family of $K$-links is related to that of $T$-links, introduced by Birman and Kofman to describe Lorenz links ~\cite{newtwis}. However, while $K$-links are defined by starting with the largest $r_1$ and stacking largest to smallest in a braid, $T$-links are defined by stacking smallest to largest. 

Some $K$-links coincide with some $T$-links. $K$-links of the form $$K((r_1, r_1s_1), (r_2, r_2s_2), \dots, (r_n, r_ns_n))$$ coincide with $T$-links of the form $$T((r_n, r_ns_n), \dots, (r_2, r_2s_2),(r_1, r_1s_1)),$$ since both are obtained by the same Dehn fillings on the same boundary tori of a link formed by the torus link $T(r_1, s_1)$ and some unknotted circles encircling some strands of it. Also, if we pull the part $(\sigma_1\dots \sigma_{r_2 - 1})^{s_2}$ of the $T$-link $T((r_2, s_2), (r_1, s_1))$ around the braid closure so that it lies below the part $(\sigma_1\dots \sigma_{r_1 - 1})^{s_1},$ we are able to see that it looks like the $K$-link $K((r_1, s_1), (r_2, s_2))$. In particular, $K$-links and $T$-links agree in the case of twisted torus knots.

$K$-links and $T$-links do not necessarily agree in general for the same parameters. But, the following question comes up naturally.

\begin{question}Are $K$-links and $T$-links the same family of links?

\end{question}

\end{remark}

This following result can be seen as a particular case of theorem 1 of \cite{cable}.

\begin{lemma}\label{toruslemma}
Let $p, q, s$ be integers with $2 \leq q < p$ and $s > 0$. The $K$-link $K((p, q), (q, qs))$ is equal to the torus link $T(q, p + qs)$. 
\end{lemma}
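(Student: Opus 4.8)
The plan is to pass to braid words and reduce everything to a relative form of the classical identity $T(p,q)=T(q,p)$. Write $\delta_n = \sigma_1\sigma_2\cdots\sigma_{n-1}$, so that by definition $K((p,q),(q,qs))$ is the closure on $p$ strands of $\delta_p^{q}\,\delta_q^{qs}$, where $\delta_q$ involves only the first $q$ strands. On the other side, the torus link $T(q,p+qs)$ is the closure on $q$ strands of $\delta_q^{p+qs}=\delta_q^{p}\,\delta_q^{qs}$. Thus it suffices to show that the closure of $\delta_p^{q}\,\delta_q^{qs}$ on $p$ strands equals the closure of $\delta_q^{p}\,\delta_q^{qs}$ on $q$ strands. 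I would detach this from the particular exponent by proving the following relative claim: for every braid word $w$ in the subgroup generated by $\sigma_1,\dots,\sigma_{q-1}$, the closure of $\delta_p^{q}\,w$ on $p$ strands equals the closure of $\delta_q^{p}\,w$ on $q$ strands. Taking $w=\delta_q^{qs}$ then yields the lemma, since $\delta_q^{qs}=(\delta_q^{q})^{s}$ is a power of the full twist $\delta_q^{q}$ and is in particular such a word.

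To prove the relative claim I would recall the braid mechanics behind $T(p,q)=T(q,p)$. In $\delta_p^{q}$ the top generator $\sigma_{p-1}$ occurs exactly $q$ times, and using the shift relation $\delta_p\,\sigma_i=\sigma_{i+1}\,\delta_p$ for $1\le i\le p-2$ together with the braid relation, one gathers these occurrences and, after a cyclic permutation (a conjugation, hence harmless for the closure), arranges that $\sigma_{p-1}$ appears only once, at the end of the word. A single Markov destabilization then deletes strand $p$. Iterating this $p-q$ times reduces $\delta_p^{q}$ on $p$ strands to $\delta_q^{p}$ on $q$ strands. The content of the relative claim is that the entire reduction can be carried out in the presence of $w$. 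Since $w$ involves only $\sigma_1,\dots,\sigma_{q-1}$, it commutes with each of $\sigma_{q+1},\dots,\sigma_{p-1}$ used in the first $p-q-1$ destabilizations, so during those steps $w$ is merely carried along (and possibly relocated by cyclic permutations, which does not affect the closure). Hence I can keep $w$ parked at the end of the word and run the standard reduction on the $\delta_p^{q}$ part essentially verbatim.

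The one place where care is genuinely needed, and what I expect to be the main obstacle, is the final destabilization, which removes strand $q+1$ and therefore manipulates $\sigma_q$; this generator does not commute with $\sigma_{q-1}$, so the twist region $w$ and the reduction interact precisely at this step. I would handle it by checking that the braid relations used to free up the single surviving $\sigma_q$ take place immediately above the block $w$ and only ever slide $\sigma_q$ past the $w$-block as a whole, after which the destabilization leaves exactly $\delta_q^{p}\,w$ on $q$ strands; the small instance $\delta_3^{2}\,\sigma_1^{2}\rightsquigarrow\sigma_1^{5}$ already exhibits this behaviour. Conceptually, the claim asserts that the twisting disk meeting the first $q$ strands of the $p$-strand torus braid becomes, under the identification $T(p,q)=T(q,p)$, a meridian disk of the complementary solid torus meeting all $q$ strands of the $\delta_q^{p}$ presentation once, so that inserting $s$ full twists there sends $\delta_q^{p}$ to $\delta_q^{p+qs}$. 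I would use this picture only as a guide and give the argument through the explicit Markov moves above, since verifying the disk correspondence directly is exactly the delicate point.
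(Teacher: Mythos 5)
Your reduction to the ``relative claim'' is exactly where the argument breaks: that claim is false, and it fails at the very step you flag as delicate. Take $p=4$, $q=3$, $w=\sigma_1\sigma_2$. The closure of $\delta_4^{3}w=(\sigma_1\sigma_2\sigma_3)^3\sigma_1\sigma_2$ on $4$ strands has underlying permutation a product of two fixed points and a transposition, hence is a $3$-component link, while the closure of $\delta_3^{4}w=(\sigma_1\sigma_2)^5$ on $3$ strands is the torus knot $T(3,5)$; the two sides do not even have the same number of components. The mechanism of the failure is visible already for $p=q+1$: using the identity $\delta_{q+1}^{q}=\delta_q^{q}\,(\sigma_q\sigma_{q-1}\cdots\sigma_1)$ and removing the unique $\sigma_q$ by a Markov move, the closure of $\delta_{q+1}^{q}w$ becomes the closure of $\delta_q^{q}(\sigma_{q-1}\cdots\sigma_1)\,w$, whereas you need the closure of $\delta_q^{q+1}w=\delta_q^{q}(\sigma_1\cdots\sigma_{q-1})\,w$. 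The two prefixes are conjugate in $B_q$ but not equal, and the conjugation that matches them does not commute with an arbitrary $w$; so the destabilization does not leave ``exactly $\delta_q^{p}w$'' and $\sigma_q$ cannot simply be slid past the $w$-block. Your test case $\delta_3^{2}\sigma_1^{2}\rightsquigarrow\sigma_1^{5}$ is misleading because there $q=2$ and $B_2$ is abelian, so the discrepancy is invisible; it bites precisely when $q\ge 3$.

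What makes the lemma itself true is the hypothesis you dismiss as incidental: $w=\delta_q^{qs}$ is central in $B_q$. For central $w$ the conjugation discrepancy is absorbed: if the destabilized prefix is $u\,\delta_q^{p}\,u^{-1}$ with $u\in B_q$, then $u\,\delta_q^{p}\,u^{-1}w=u\,(\delta_q^{p}w)\,u^{-1}$ has the same closure as $\delta_q^{p}w$. So your Markov-move plan can be repaired, but only by restricting the relative claim to powers of the full twist and proving that the destabilized prefix is $B_q$-conjugate to $\delta_q^{p}$; stated for all $w\in\langle\sigma_1,\dots,\sigma_{q-1}\rangle$ it is simply wrong. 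This is also the distinction the paper's proof is built on, and, ironically, the geometric picture in your last paragraph (which you set aside as merely a guide) is the paper's actual argument: it encodes the twists as $(1/s)$-surgery on an unknotted circle $J$ encircling the first $q$ strands of $T(p,q)$, isotopes the link $T(p,q)\cup J$ so that $J$ encircles all $q$ strands of $T(q,p)$, and only then performs the surgery. Surgery data is carried along by any isotopy with no bookkeeping of disk parametrizations, which is exactly why full twists --- and not general tangles $w$ --- survive the identification $T(p,q)=T(q,p)$.
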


\begin{proof}The $K$-link $K((p, q), (q, qs))$ is obtained from the link $T(p, q)\cup J$, where $J$ is an unknotted circle encircling the first $q$ longitudinal strands of the torus link $T(p, q)$, after doing $(1/s)$-surgery on $J$. But, after deforming the torus link $T(p, q)$ to the torus link $T(q, p)$ by exchanging the meridional and longitudinal strands, the unknotted circle $J$ encircles the first $q$ meridional strands of $T(q, p)$. Now, we can move $J$ just to encircle all the $q$ longitudinal strands of $T(q, p)$. If we do  $(1/s)$-surgery on $J$, we get exactly the torus link $T(q, p + qs)$.
\end{proof}

\begin{proposition} \label{proposition1}
Let $p, q, k, s$ be integers with $2 \leq q < p$ and $k, s > 0$. Then, the $K$-link $K((p, q + k), (q, qs))$ is equal to the $K$-link $K((p + qs, q), (p, k))$.
\end{proposition}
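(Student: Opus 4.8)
The plan is to mirror the surgery-and-torus-swap argument used for Lemma~\ref{toruslemma}, carrying an extra block of twisting along through that argument. I would first reduce to a relative version of the lemma by a braid rearrangement. Since $(\sigma_1\cdots\sigma_{p-1})^{q+k}=(\sigma_1\cdots\sigma_{p-1})^{q}(\sigma_1\cdots\sigma_{p-1})^{k}$ and a braid word commutes with its own powers, the defining braid of $K((p,q+k),(q,qs))$ is
$$(\sigma_1\cdots\sigma_{p-1})^{q}(\sigma_1\cdots\sigma_{p-1})^{k}(\sigma_1\cdots\sigma_{q-1})^{qs}.$$
Because a link is unchanged under cyclic permutation of a braid word (conjugation together with Markov's theorem), its closure equals the closure of
$$(\sigma_1\cdots\sigma_{p-1})^{q}(\sigma_1\cdots\sigma_{q-1})^{qs}(\sigma_1\cdots\sigma_{p-1})^{k}.$$
The initial factor $(\sigma_1\cdots\sigma_{p-1})^{q}(\sigma_1\cdots\sigma_{q-1})^{qs}$ is exactly the braid whose closure Lemma~\ref{toruslemma} identifies with $T(q,p+qs)=T(p+qs,q)$, while $K((p+qs,q),(p,k))$ is by definition the closure of $(\sigma_1\cdots\sigma_{p+qs-1})^{q}(\sigma_1\cdots\sigma_{p-1})^{k}$, that is, the $T(p+qs,q)$ braid with the same trailing block $(\sigma_1\cdots\sigma_{p-1})^{k}$ appended. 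So it suffices to prove the identity of Lemma~\ref{toruslemma} in the presence of this trailing block acting on the first $p$ strands.

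To do this I would run the geometric argument of Lemma~\ref{toruslemma} directly. Present $K((p,q+k),(q,qs))$ as the result of $(1/s)$-surgery on an unknot $J$ encircling the first $q$ longitudinal strands of the torus link $T(p,q+k)$; this is legitimate because $qs$ is a multiple of $q$, so the surgery realizes exactly $s$ full twists on those $q$ strands. Place $T(p,q+k)$ on the standard torus and swap the meridional and longitudinal directions as in the lemma; under this move $J$ comes to encircle a bundle of $q$ strands that can be slid into a full-twist position, and performing the surgery trades the $(q,qs)$-twist region for $qs$ additional strands, converting the $(p,q)$-part into the torus link $T(p+qs,q)$. The extra $k$ meridional windings of $T(p,q+k)$ are disjoint from the disk bounded by $J$ and are simply carried along, reappearing as the block $(\sigma_1\cdots\sigma_{p-1})^{k}$ on the first $p$ of the $p+qs$ strands. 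Reading off the resulting braid then gives $(\sigma_1\cdots\sigma_{p+qs-1})^{q}(\sigma_1\cdots\sigma_{p-1})^{k}$, which is $K((p+qs,q),(p,k))$.

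The main obstacle is making precise that the trailing block is genuinely undisturbed, i.e.\ that the lemma's isotopy can be taken to fix a meridian disk $\Delta$ meeting the link in the $p$ strands that feed the block $(\sigma_1\cdots\sigma_{p-1})^{k}$, and that these strands are exactly the first $p$ of the $p+qs$ strands in the $T(p+qs,q)$ presentation, in the correct adjacent, first-$p$ position. The difficulty is that the torus swap genuinely rearranges strands and the surgery raises the strand count from $p$ to $p+qs$, so one must check both that the block is supported away from the surgery disk bounded by $J$ throughout the isotopy and that it lands on the correct strands afterward; here the count $qs$ of new strands matches the number of positive Markov stabilizations relating the two braids, which is a useful consistency check. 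Once $\Delta$ is identified and shown to be preserved, cutting along $\Delta$ and inserting $(\sigma_1\cdots\sigma_{p-1})^{k}$ on both ends yields equal links, completing the proof.
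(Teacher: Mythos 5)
Your proposal is correct and takes essentially the same route as the paper: the paper's proof likewise applies the meridional/longitudinal exchange of Lemma~\ref{toruslemma} to the $(\sigma_1\cdots\sigma_{p-1})^{q}(\sigma_1\cdots\sigma_{q-1})^{qs}$ portion while carrying the extra block $(\sigma_1\cdots\sigma_{p-1})^{k}$ along, which after the swap sits ``stacked in the meridional direction'' and is then isotoped back to the longitudinal direction to give $K((p+qs,q),(p,k))$. The technical point you flag, that the $k$-block is undisturbed by the swap-and-surgery and lands on $p$ adjacent strands of the resulting $p+qs$-strand braid, is exactly what the paper's two-sentence proof takes for granted, so your write-up is if anything the more careful of the two.
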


\begin{proof}As in lemma~\ref{toruslemma}, exchanging meridional and longitudinal strands of the part $(\sigma_1 \dots  \sigma_{p-1})^q(\sigma_1 \dots  \sigma_{q-1})^{qs}$ of the $K$-link $K((p, q + k), (q, qs))$ gives $(\sigma_1 \dots  \sigma_{q-1})^{p + qs}$ with the braid $(\sigma_1 \dots  \sigma_{p-1})^k$ stacked in the meridional direction. Isotope to the longitudinal direction, 
giving $K((p + qs, q), (p, k))$.
\end{proof}

The symmetry of the last proposition allows us to know geometric types of $K$-links not obtained by full twists.

\begin{corollary}\label{corollary1}  
Let $p, q, k, s$ be integers with $p, q + k$ coprime, $2 \leq q < p$ and $k> 0$. Then, the $K$-knot $K((p + qs, q), (p, k))$ is hyperbolic for $s > 1$. If $ s = 1,$ then the $K$-knot $K((p + qs, q), (p, k))$ is a torus knot if $k = 1$ and $p = mq + m + 1$ for some integers $m\geq 1$.

\end{corollary}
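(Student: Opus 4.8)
The plan is to use Proposition~\ref{proposition1} to convert the knot into a standard twisted torus knot, and then to treat $s>1$ and $s=1$ separately, since the former is covered by known classifications while the latter (the case $|s|=1$) must be handled by hand.

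First I would apply Proposition~\ref{proposition1} to rewrite $K((p+qs,q),(p,k))$ as $K((p,q+k),(q,qs))$. By the identification of twisted torus knots with $K$-knots noted just after the definition, $K((p,q+k),(q,qs))=T(p,q+k;q,s)$, and the coprimality of $p$ and $q+k$ guarantees that this is a genuine knot on a torus knot rather than a link. Using the meridian--longitude symmetry $T(a,b;r,s)=T(b,a;r,s)$, valid here because $r=q<\min\{p,q+k\}$, I may assume the two torus parameters are written in decreasing order. In either ordering the twisting number $r=q$ satisfies $r\neq p$ and, since $0<q<q+k$, $r$ is not a multiple of the second torus parameter; these are exactly the hypotheses needed below.

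For $s>1$ I would invoke the classification quoted in the introduction: for $|s|\ge 2$, a twisted torus knot with $r\neq p$ and $r$ not a multiple of the torus parameter is hyperbolic or a torus knot (see \cite{LeeTorusknotsobtained}, \cite{hyperbolicity}); in particular the exterior has no essential torus, so the satellite case of Thurston's trichotomy is already excluded (compare \cite{cable}). It then remains only to rule out the torus-knot alternative, for which I would appeal to Lee's explicit list of which twisted torus knots are torus knots and check that $T(p,q+k;q,s)$ with $s>1$ matches none of the exceptional families. Carrying out this matching—confirming our parameters avoid every torus-knot case on the list—is the step I expect to be the main obstacle, as it is where the genuine content of the hyperbolicity statement sits.

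For $s=1$ and $k=1$ the knot is $T(p,q+1;q,1)$, and the hypothesis $p=mq+m+1=m(q+1)+1$ gives $p\equiv 1\pmod{q+1}$; here no general classification is available, so I would argue directly. After the meridian--longitude exchange the knot becomes $K((q+1,p),(q,q))$, the torus knot $T(q+1,p)$ together with a single full twist on $q$ of its $q+1$ braid strands. Writing the defining braid on $q+1$ strands and using $p\equiv 1\pmod{q+1}$ to split off $m$ central full twists $\Delta^{2m}$, I would show that the residual braid, whose encircling curve bounds all but one strand, is isotopic to a torus braid, so that the whole closure is the torus knot $T(q+1,p+q-1)$. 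The positivity of the braid gives the Bennequin genus $q(p+q-2)/2$, matching that of $T(q+1,p+q-1)$ and serving as a consistency check, while the coprimality $\gcd(q+1,p+q-1)=1$ follows from $p\equiv 1\pmod{q+1}$. Producing the isotopy of the residual braid cleanly, while tracking strand counts through the exchange, is the second place where care is required.
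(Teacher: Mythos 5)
Your proposal follows the paper's skeleton exactly in its key step: both proofs begin by applying Proposition~\ref{proposition1} to rewrite $K((p+qs,q),(p,k))$ as $K((p,q+k),(q,qs))=T(p,q+k;q,s)$, and both then reduce to known facts about twisted torus knots. The difference is in how each case is finished, and there your route genuinely diverges. For $s>1$ the paper's proof is a one-line citation: since $q$ is not a multiple of $q+k$, the theorem quoted as \cite{hyperbolicity} is taken to yield hyperbolicity of $T(p,q+k;q,s)$ directly, with no torus-knot alternative left over; so the step you single out as ``the main obstacle'' (matching the parameters against Lee's classification \cite{LeeTorusknotsobtained} of torus knots with $|s|\geq 2$) is not needed in the paper's reading of the literature, though your more cautious dichotomy-plus-exclusion plan would also succeed --- you do, however, leave that check unexecuted. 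For $s=1$ the paper again only cites a known sufficient condition (from \cite{Positively}): $T(p,q+k;q,1)$ is a torus knot when $q+k-1=q$ and $p=m(q+k)+1$, i.e.\ $k=1$ and $p=mq+m+1$. You instead outline a self-contained braid proof, and this genuinely different route does go through: writing $\Delta_{q+1}$, $\Delta_q$ for the half twists on all $q+1$ strands and on the first $q$ strands respectively, your residual-braid claim is the conjugacy in $B_{q+1}$
\begin{equation*}
(\sigma_1\cdots\sigma_q)(\sigma_1\cdots\sigma_{q-1})^{q}
=(\sigma_1\cdots\sigma_q)\Delta_q^{2}
\ \sim\ \Delta_q(\sigma_1\cdots\sigma_q)\Delta_q
=(\sigma_1\cdots\sigma_q)^{-1}\Delta_{q+1}^{2}
=(\sigma_1\cdots\sigma_q)^{q},
\end{equation*}
which follows from $\Delta_{q+1}=(\sigma_1\cdots\sigma_q)\Delta_q$, $\Delta_q^{2}=(\sigma_1\cdots\sigma_{q-1})^{q}$, and centrality of $\Delta_{q+1}^{2}$; prepending the central factor $\Delta_{q+1}^{2m}$ then gives $(\sigma_1\cdots\sigma_q)^{p+q-1}$, whose closure is $T(q+1,p+q-1)$, exactly as you predicted, and your insistence on conjugacy rather than mere equality of closures before adding the $m$ full twists is precisely the right point of care. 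In sum: the paper's proof is shorter and rests entirely on citations; yours handles explicitly the ordering issue of whether $q+k$ exceeds $p$, and in the $s=1$ case identifies \emph{which} torus knot arises, which is more precise than the statement being proved.
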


\begin{proof}If $k > 0$, $q$ can't be a multiple of $q + k.$ Then, from \cite{hyperbolicity}, the $K$-knot $K((p, q + k), (q, qs))$ is hyperbolic if $s > 1$. From the last proposition, the $K$-knot $K((p + qs, q), (p, k))$ is hyperbolic as well. 

If $s = 1$, it is known from \cite{Positively} that $K((p, q + k), (q, qs))$ is a torus knot if $q + k - 1 = q$ and $p = m(q + k) + 1$ for some integers $m\geq 1$. Therefore, from the last proposition,  $K((p + qs, q), (p, k))$   is a torus knot if $k = 1$ and $p = mq + m + 1$.
\end{proof}

We ask more general:

\begin{question}What are the geometric types of the $K$-links in terms of the parameters?

\end{question}

\begin{lemma}\label{lemmaSymmetry}
Let $p - 1 - 2(q - 1) - 1 = k,$ $p - 2 = ka$ and $q - 1 = kb,$ where $a, b, k$ are positive integers with $k > 1.$ The twisted torus knot $T(p, q; p - 1, - 1)$ is equivalent to the $K$-knot 
$$K((p - 1, k), (ka, 1), (ka - 1, 1), \cdots, (k(1 + b) + 1, 1), (kb , 1), \cdots, (2, 1)).$$
%% $$K((p - 1, k), (p - 2, 1), \cdots, (p - 1 - (q - 1), 1), (q - 1, 1), (q - 2, 1), \cdots, (2, 1)).$$

\end{lemma}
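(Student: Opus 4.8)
The plan is to prove this by the meridian--longitude exchange already used for Lemma~\ref{toruslemma} and Proposition~\ref{proposition1}; this is the \emph{symmetry} to which the statement alludes. First I would record $T(p,q;p-1,-1)$ as a decorated torus link. Writing the torus knot $T(p,q)$ as the closure of $(\sigma_1\cdots\sigma_{p-1})^q$ on $p$ strands, the knot $T(p,q;p-1,-1)$ is $T(p,q)$ together with an unknot $J$ encircling $p-1$ of the $p$ longitudinal strands, the single negative full twist being realised as a surgery on $J$. Thus $J$ bounds a disk meeting the braid in $p-1$ points.

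Next I would exchange the meridional and longitudinal directions of the Heegaard torus carrying $T(p,q)$. As in Lemma~\ref{toruslemma}, this sends $T(p,q)$ to $T(q,p)$, the closure of $(\sigma_1\cdots\sigma_{q-1})^p$ on $q$ strands, and carries $J$ to a circle $J'$. The essential difference from the earlier lemmas is that here $J$ encircles $p-1>q$ strands, so after the exchange $J'$ no longer bounds a disk transverse to a single block of parallel strands. The core of the argument is to isotope $J'$ into standard position and to record the crossings created when the surgery on $J'$ is re-expressed as a braid. I expect this to separate into two pieces: a part of $J'$ that wraps all the strands, which upon isotoping to the longitudinal direction contributes the leading torus block $(\sigma_1\cdots\sigma_{p-2})^{k}$ with $k=p-2q$ (the $T(p-1,k)$ factor), together with a single Markov destabilisation dropping the strand count from $p$ to $p-1$; and a defect produced by the one strand excluded by $J$, which unwinds into a descending staircase of single partial shifts $(\sigma_1\cdots\sigma_{r-1})$.

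To read off the parameters I would use the arithmetic forced by the hypotheses, namely $p-2=ka$, $q-1=kb$ and (consequently) $a=2b+1$. These pin down the staircase: it should run from $r=ka$ down to $r=k(b+1)+1$, then skip the $k$ consecutive values $kb+1,\dots,kb+k=k(b+1)$, and resume from $r=kb$ down to $r=2$. Matching this term by term with the defining braid
\[
(\sigma_1\cdots\sigma_{p-2})^{k}\,(\sigma_1\cdots\sigma_{ka-1})(\sigma_1\cdots\sigma_{ka-2})\cdots(\sigma_1\cdots\sigma_{k(b+1)})(\sigma_1\cdots\sigma_{kb-1})\cdots(\sigma_1)
\]
of $K((p-1,k),(ka,1),\dots,(k(1+b)+1,1),(kb,1),\dots,(2,1))$ yields the stated equivalence.

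The main obstacle is the middle step: proving that the surgery on $J'$ unwinds into exactly this positive braid. Concretely, I must show that the strand excluded by $J$ is responsible for a gap of precisely $k$ missing partial shifts, located between the indices $kb$ and $k(b+1)$, and that no other crossings are gained or lost. I would carry this out by an explicit isotopy (best presented with a figure) tracking $J'$ and the excluded strand through the exchange; the delicate bookkeeping of the staircase indices and of the length-$k$ gap, governed entirely by $k=p-2q$ and $a=2b+1$, is where the real work lies.
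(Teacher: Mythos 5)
Your proposal correctly anticipates the final answer (the staircase running from $ka$ down to $k(1+b)+1$, the gap of $k$ skipped indices, the resumption at $kb$, and the leading $(p-1,k)$ block), but as a proof it has a genuine gap, and you name it yourself: the entire content of the lemma is the ``middle step'' that you defer. Asserting that the surgered curve $J'$ should unwind into exactly the stated positive braid, with a gap of precisely $k$ missing partial shifts located between the indices $kb$ and $k(b+1)$, is not a proof of that fact; it is a restatement of the lemma in braid-word form. Nothing in the proposal derives the location or the length of the gap --- these are exactly the quantities the lemma claims, and the arithmetic $k=p-2q$, $a=2b+1$ only tells you what the answer must be \emph{if} the general shape you guessed is right. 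There is also an internal tension in the plan: you propose to pass to the $q$-strand picture $T(q,p)$ via the meridian--longitude exchange of Lemma~\ref{toruslemma} and Proposition~\ref{proposition1}, yet the target braid lives on $p-1$ strands and you invoke a single Markov destabilisation dropping the strand count from $p$ to $p-1$; since the middle step is never carried out, one cannot tell how (or whether) these two pictures are to be reconciled.

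For comparison, the paper never performs the meridian--longitude exchange in this lemma. It works directly with the braid word $(\sigma_1\cdots\sigma_{p-1})^q(\sigma_1\cdots\sigma_{p-2})^{-(p-1)}$ on $p$ strands: first the $q-1$ rows of positive crossings are pushed back into the negative full twist and partially cancelled; then one Markov destabilisation removes the rightmost strand; then the remaining positive rows are sent one crossing at a time around the braid closure, each such pass converting one negative row $(\sigma_{p-2}^{-1}\cdots\sigma_1^{-1})$ into a shorter partial negative row, until the negative block becomes $(\sigma_{p-2}^{-1}\cdots\sigma_1^{-1})^{k}$ followed by a staircase of partial rows --- this is where the identity $p-1-2(q-1)-1=k$ enters and where the length-$k$ gap is actually produced; the leftover negative staircase is then slid into position, and finally the resulting all-negative braid is flipped horizontally to yield the positive braid whose closure is the stated $K$-knot. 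It is precisely this sequence of cancellations and recounts that your outline replaces with ``I expect.'' If you want to keep the surgery/exchange framing, you must supply an equivalent explicit computation tracking $J'$ and the excluded strand through the exchange; until that is done, the proposal is a plan for a proof rather than a proof.
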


\begin{proof}Consider $T(p, q; p - 1, -1)$ in the representation as in the left of Figure~\ref{Fig:S6}, where it shows $T(12, 5; 11, -1)$.

Since the $q$ meridional strands in the braid $(\sigma_1\dots \sigma_{p - 1})^{q}$ are in the opposite direction from the $p - 1$ meridional strands in the braid $(\sigma_1\dots \sigma_{p - 2})^{p - 1}$ and the third parameter of $T(p, q; p - 1, -1)$ is the first one minus one, we can push back the first $q - 1$ meridional strands of $(\sigma_1\dots \sigma_{p - 1})^q$ as done for $T(12, 5; 11, -1)$ in the middle and right drawings of Figure~\ref{Fig:S6}. 

After that, we end up with the braid
$$(\sigma_1 \dots \sigma_{q - 1}) \dots (\sigma_{1})(\sigma_1 \dots \sigma_{p - 1})(\sigma_1^{-1})(\sigma_2^{-1}\sigma_1^{-1})\dots$$ $$(\sigma_{q - 3}^{-1} \dots \sigma_1^{-1})(\sigma_{q - 2}^{-1} \dots \sigma_1^{-1})(\sigma_{p - 2}^{-1} \dots \sigma_1^{-1})^{p - 1 - (q - 1)}.$$

\begin{figure}
\includegraphics[scale=0.65]{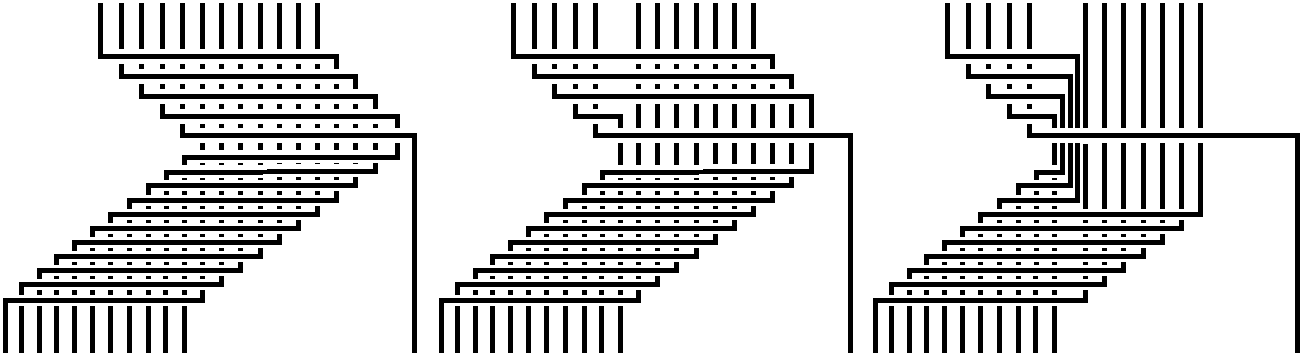}  
  %% Put the caption after the figure
  \caption{}
  %% Put the label at the very end -- otherwise the latex compiler can have problems. 
  \label{Fig:S6}
\end{figure}

\begin{figure}
\includegraphics[scale=0.7]{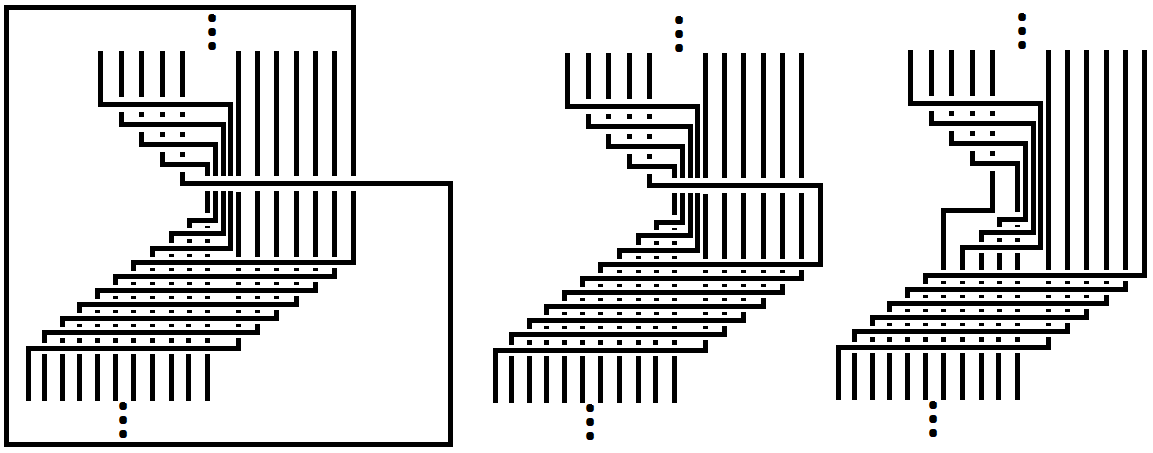}  
  %% Put the caption after the figure
  \caption{}
  %% Put the label at the very end -- otherwise the latex compiler can have problems. 
  \label{Fig:S7}
\end{figure}

Apply one Markov move under the furthest right strand to remove it. Then, slide the remaining horizontal strand across the knot, yielding the braid 
$$(\sigma_1 \dots \sigma_{q - 1}) \dots (\sigma_{1})(\sigma_2^{-1})(\sigma_3^{-1}\sigma_2^{-1})\dots$$ $$(\sigma_{q - 2}^{-1} \dots \sigma_2^{-1})(\sigma_{q - 1}^{-1} \dots \sigma_2^{-1})(\sigma_{p - 2}^{-1} \dots \sigma_1^{-1})^{p - 1 - (q - 1) - 1}.$$
This procedure is illustrated in Figure~\ref{Fig:S7} for $T(12, 5; 11, -1)$.

Now, we start to cancel some crossings. In the up part of $T(p, q; p - 1, - 1)$ there are still positive crossings, but we can remove them by sending them to the lower part. We start sending the first crossing around the braid closure to the bottom  so that it gets cancelled as in Figure~\ref{Fig:S9}.
  
\begin{figure}
\includegraphics[scale=0.8]{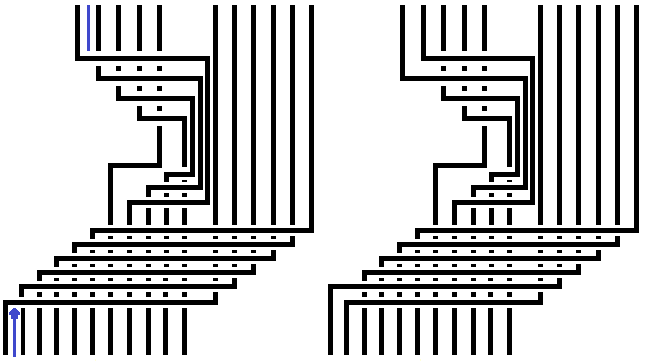} 
  %% Put the caption after the figure
  \caption{}
  %% Put the label at the very end -- otherwise the latex compiler can have problems. 
  \label{Fig:S9}
\end{figure}

After doing this to all crossings of the first line, previously given by the braid $(\sigma_1 \dots \sigma_{q - 1})$, it disappears. The lowest line, previously given by the braid $(\sigma_{p - 2}^{-1} \dots \sigma_{1}^{-1})$, becomes $(\sigma_{p - 2}^{-1} \dots \sigma_{q}^{-1})$. Then, after doing the same procedure for the current  highest line, given by the braid $(\sigma_1 \dots \sigma_{q - 2})$, it disappears and the second lowest line, given by the braid $(\sigma_{p - 2}^{-1} \dots \sigma_{1}^{-1})$, becomes $(\sigma_{p - 2}^{-1} \cdots \sigma_{q - 1}^{-1})$. And so we continue. After removing the final positive line, the part given by the braid
$$(\sigma_{p - 2}^{-1} \dots \sigma_{1}^{-1})^{p - 1 - (q - 1) - 1},$$
is replaced by
$$(\sigma_{p - 2}^{-1} \dots \sigma_{1}^{-1})^{p - 1 - (q - 1) - 1 - (q - 1)}(\sigma_{p - 2}^{-1} \dots \sigma_{2}^{-1})(\sigma_{p-2}^{-1} \dots \sigma_{3}^{-1})\dots (\sigma_{p - 2}^{-1} \dots \sigma_{q}^{-1}).$$
Since $p - 1 -2(q - 1) - 1 = k,$ we can rewrite it as 
$$(\sigma_{p - 2}^{-1} \dots \sigma_{1}^{-1})^{k}(\sigma_{p - 2}^{-1} \dots \sigma_{2}^{-1})(\sigma_{p-2}^{-1} \cdots \sigma_{3}^{-1})\dots (\sigma_{p - 2}^{-1} \dots \sigma_{q}^{-1}).$$
In the left drawing of Figure~\ref{Fig:S11} illustrates how $T(12, 5; 11, - 1)$ ends after this step.

\begin{figure}
\includegraphics[scale=0.4]{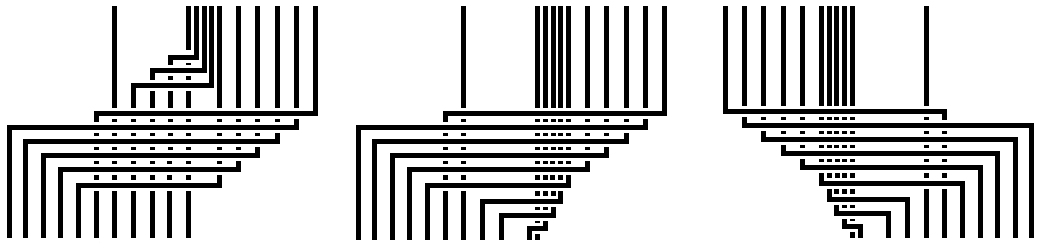}      
  %% Put the caption after the figure
  \caption{}
  %% Put the label at the very end -- otherwise the latex compiler can have problems. 
  \label{Fig:S11}
\end{figure}

The part $(\sigma_2^{-1})(\sigma_3^{-1}\sigma_2^{-1})\dots(\sigma_{q - 1}^{-1} \dots \sigma_2^{-1})$ above the last part can be moved down to look like the braid
$$(\sigma_{p - 2}^{-1} \dots \sigma_{p - q + 1)}^{-1})(\sigma_{p - 2}^{-1} \dots \sigma_{p - q + 2}^{-1})\dots (\sigma_{p - 2}^{-1}).$$
In the middle drawing of Figure~\ref{Fig:S11} shows the result of this isotopy for $T(12, 5; 11, - 1).$

Now, the last braid is the mirror image of the following braid (flip the last braid horizontally to see it, as illustrated in the drawing to the right of Figure~\ref{Fig:S11})$$(\sigma_{1} \dots \sigma_{p - 2})^k(\sigma_{1} \dots \sigma_{p - 3}) \dots  (\sigma_{1} \dots \sigma_{p - q - 1})(\sigma_{1} \dots \sigma_{q - 2}) \dots (\sigma_{1}),$$ which its closure is the $K$-knot
$$K((p - 1, k), (p - 2, 1), \dots, (p - q, 1), (q - 1, 1), \dots, (2, 1)).$$

Since $p - 1 - 2(q - 1) - 1 = p - 2q = k$ and $q = kb + 1$, we have $p - q = k(1 + b) + 1.$ Therefore, the twisted torus knot $T(p, q; p - 1, - 1)$ is equivalent to the $K$-knot $$K((p - 1, k), (ka, 1), \dots, (k(1 + b) + 1, 1), (kb, 1), (kb - 1, 1), \dots, (2, 1)). \qedhere $$

\end{proof}

\section{Half Twists}

In this section, we define \emph{half twist}, and using it, we can show one equivalence between $K$-links with different parameters.

\begin{definition}\label{D1} 
Let $L$ be a set of $k$ parallel straight segments on a plane $Y$ which form a trivial braid. Consider $J$ an unknotted circle bounding a disc $D$ which intersects $Y$ transversely in a spanning arc and $L$ in  $k$ points in its interior. A \emph{positive} or \emph{negative} \emph{half twist}  is defined by the following procedure:
cut open $L$ along $D$ and take two copies $D_1$, $D_2$ of $D$, where $D_1$, $D_2$ is the top, bottom disc, respectively. Then, rotate $D_1$ by $\pm (180^{\circ})$ degrees, respectively, and  glue $D_1$ and $D_2$ back together. This entire procedure is illustrated in Figure~\ref{Fig:S14}.

\end{definition}

\begin{figure}
\includegraphics[scale=0.83]{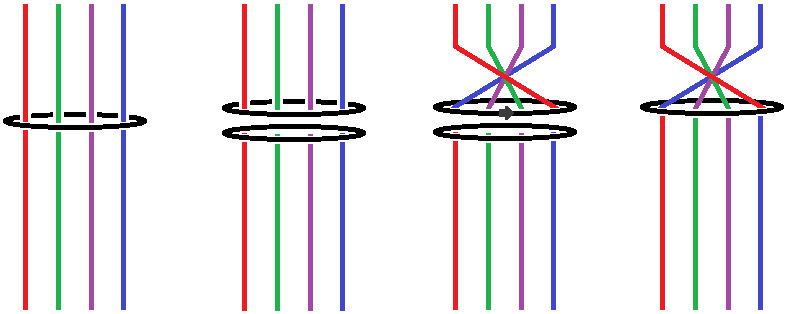}  
  %% Put the caption after the figure
  \caption{}
  %% Put the label at the very end -- otherwise the latex compiler can have problems. 
  \label{Fig:S14}
\end{figure}

A positive half twist replaces the $k$ parallel straight segments  with the braid
$$(\sigma_1\sigma_2\dots \sigma_{k - 2}\sigma_{k - 1})(\sigma_1\sigma_2\dots \sigma_{k - 3}\sigma_{k - 2})\dots (\sigma_1\sigma_2)(\sigma_1).$$
On the other hand, a negative half twist transforms the $k$ parallel straight segments into the braid 
$$(\sigma_{k - 1}^{-1}\sigma_{k - 2}^{-1} \dots \sigma_{2}^{-1}\sigma_{1}^{-1})(\sigma_{k - 1}^{-1}\sigma_{k - 2}^{-1} \dots \sigma_{2}^{-1})\dots (\sigma_{k - 1}^{-1}\sigma_{k - 2}^{-1})(\sigma_{k - 1}^{-1}),$$
where $\sigma_{i}^{-1}$ is the inverse of $\sigma_{i}$ in the braid group $B_k$.

Note that a half twist is only defined on a diagram. Half twists on different diagrams may give inequivalent links. See, for example, Figure~\ref{Fig:S18}.

\begin{figure}
\includegraphics[scale=0.6]{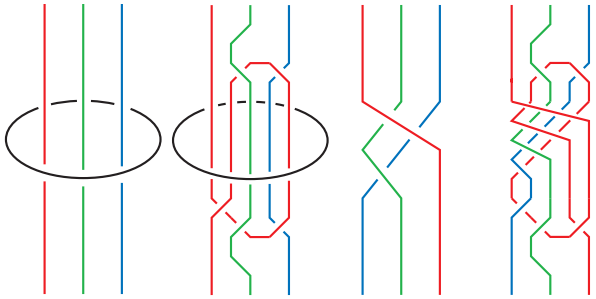}     
  %% Put the caption after the figure
  \caption{In the first two drawings, we have different diagrams of the same link. The last two drawings show the effect of  a positive half twist followed by a trivial Dehn filling on the first two diagrams. We see that the results are different.}
  %% Put the label at the very end -- otherwise the latex compiler can have problems. 
  \label{Fig:S18}
\end{figure}

\begin{lemma} \label{lemma3}
The $K$-knot $K((6, 2), (4, 3 + 4(s - 1)))$ is equivalent to the $K$-knot $K((4, 5 + 4(s - 1)), (3, 1))$ for $s > 0.$

\end{lemma}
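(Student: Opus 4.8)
The plan is to collapse the six-strand picture to a four-strand one using the symmetry already established in Proposition~\ref{proposition1}, and then to finish with a one-line braid-group computation. First I would apply Proposition~\ref{proposition1} with $p=4$, $q=2$, $k=3+4(s-1)$ and twisting parameter $1$ (these satisfy $2\le q<p$ and both $k,1>0$). The right-hand side of that proposition then reads $K((p+q,q),(p,k))=K((6,2),(4,3+4(s-1)))$, so the proposition gives
\[
K((6,2),(4,3+4(s-1)))=K((4,5+4(s-1)),(2,2)).
\]
It therefore suffices to prove $K((4,N),(2,2))=K((4,N),(3,1))$, where $N=5+4(s-1)=4s+1$. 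Unwinding the definition, these two $K$-knots are the closures of the four-strand braids $(\sigma_1\sigma_2\sigma_3)^N\sigma_1^2$ and $(\sigma_1\sigma_2\sigma_3)^N\sigma_1\sigma_2$, so the whole lemma reduces to showing that these two braids have the same closure.

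The key observation is that $N-1=4s$ and that $(\sigma_1\sigma_2\sigma_3)^4$ generates the center of $B_4$, so the factor $(\sigma_1\sigma_2\sigma_3)^{N-1}$ is central. I would then check that conjugation by $\sigma_2$ carries one braid to the other. Using $\sigma_2\sigma_1\sigma_2=\sigma_1\sigma_2\sigma_1$ and the commutation $\sigma_1\sigma_3=\sigma_3\sigma_1$ one computes
\[
\sigma_2(\sigma_1\sigma_2\sigma_3\sigma_1\sigma_2)\sigma_2^{-1}=\sigma_2\sigma_1\sigma_2\sigma_3\sigma_1=\sigma_1\sigma_2\sigma_1\sigma_3\sigma_1=\sigma_1\sigma_2\sigma_3\sigma_1^2 .
\]
Since $\sigma_2$ commutes with the central factor $(\sigma_1\sigma_2\sigma_3)^{N-1}$, this local identity upgrades to
\[
\sigma_2\bigl((\sigma_1\sigma_2\sigma_3)^N\sigma_1\sigma_2\bigr)\sigma_2^{-1}=(\sigma_1\sigma_2\sigma_3)^N\sigma_1^2 ,
\]
so the two braids are conjugate in $B_4$ and hence have equal closures. (Both closures are knots, as the underlying permutations are $4$-cycles, so no orientation or component subtleties arise.) Chaining this with the first display proves the lemma.

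The point that makes this work — and the step I expect to be the crux — is the centrality invoked above: a naive attempt to match $\sigma_1^2$ with $\sigma_1\sigma_2$ by a Markov destabilization fails, because the surrounding twists $(\sigma_1\sigma_2\sigma_3)^{N-1}$ genuinely involve the fourth strand, and it is precisely the congruence $N\equiv 1\pmod 4$ that renders this surrounding braid central and so reduces everything to conjugating the short tails $\sigma_1\sigma_2\sigma_3\sigma_1\sigma_2$ and $\sigma_1\sigma_2\sigma_3\sigma_1^2$. Geometrically, the conjugation by $\sigma_2$ is exactly a half-twist move of the kind introduced in this section, performed on the appropriate diagram; an alternative write-up would replace the braid computation by the half-twist picture, but in view of Figure~\ref{Fig:S18} one would then have to be careful to carry the move out on the diagram for which it yields $K((4,N),(3,1))$ rather than an inequivalent link.
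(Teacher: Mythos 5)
Your proof is correct, but it takes a genuinely different route from the paper's. The paper argues diagrammatically: it introduces the auxiliary link $L \cup J_1 \cup J_2$, where $L$ is the closure of $(\sigma_1\cdots\sigma_5)^2\sigma_1^{-1}\sigma_3^{-1}$, and shows that $(1/s)$-surgery on $J_1$ followed by a negative half twist and trivial filling on $J_2$ yields $K((6,2),(4,3+4(s-1)))$ when performed on one diagram of this link, while the same operations performed after an isotopy and Markov moves (which reduce $L$ to the torus link $T(4,2)$) yield $K((4,5+4(s-1)),(3,1))$ --- a ``same surgeries, two diagrams'' argument using the half-twist machinery of that section. You instead run the paper's own Proposition~\ref{proposition1} backwards (with $p=4$, $q=2$, $k=3+4(s-1)$ and twisting parameter $1$; the hypotheses are satisfied) to replace the six-strand knot by $K((4,N),(2,2))$ with $N=4s+1$, and then prove $K((4,N),(2,2))=K((4,N),(3,1))$ by pure algebra in $B_4$: since $N-1=4s$ and $(\sigma_1\sigma_2\sigma_3)^4$ generates the center of $B_4$, your local identity $\sigma_2(\sigma_1\sigma_2\sigma_3\sigma_1\sigma_2)\sigma_2^{-1}=\sigma_1\sigma_2\sigma_3\sigma_1^2$ (which checks out: one braid relation, one far commutation) promotes to a conjugacy of the full braids, and conjugate braids have isotopic closures. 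Your approach buys a figure-free, self-contained argument that isolates the arithmetic reason the equivalence holds ($N\equiv 1\bmod 4$ making the surrounding twists central), and it even streamlines the proof of Theorem~\ref{answerMorimoto}: since $T(4,N;2,1)=K((4,N),(2,2))$ by definition, your conjugation makes the detour through the six-strand braid unnecessary there. What the paper's approach buys is a worked illustration of the half-twist technique that it reuses in Section~5. One small remark: your parenthetical about both closures being knots is superfluous, since conjugation in $B_n$ preserves the closure as a link, components and orientations included.
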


\begin{proof}Consider the link $L \cup J_1 \cup J_2,$ where $L$ is the link given by the closure of the braid $$(\sigma_1\dots\sigma_5)^{2}(\sigma_1^{-1})(\sigma_3^{-1}),$$
and $J_1, J_2$ is the red, blue circle, respectively, shown in the left of Figure~\ref{Fig:S3}.

By doing $(1/s)$-surgery on $J_1$, we replace $L$ with the closure of the following braid$$(\sigma_1\dots\sigma_5)^{2}(\sigma_1\sigma_2\sigma_3)^{4s}(\sigma_1^{-1})(\sigma_3^{-1}).$$ Next, we do one negative half twist followed by a trivial Dehn filling on $J_2$ replacing the last link by the closure of the following braid $$(\sigma_1\dots\sigma_5)^{2}(\sigma_1\sigma_2\sigma_3)^{3 + 4(s - 1)},$$ which is the $K$-knot $K((6, 2), (4, 3 + 4(s - 1)))$.

\begin{figure}
\includegraphics[scale=0.47]{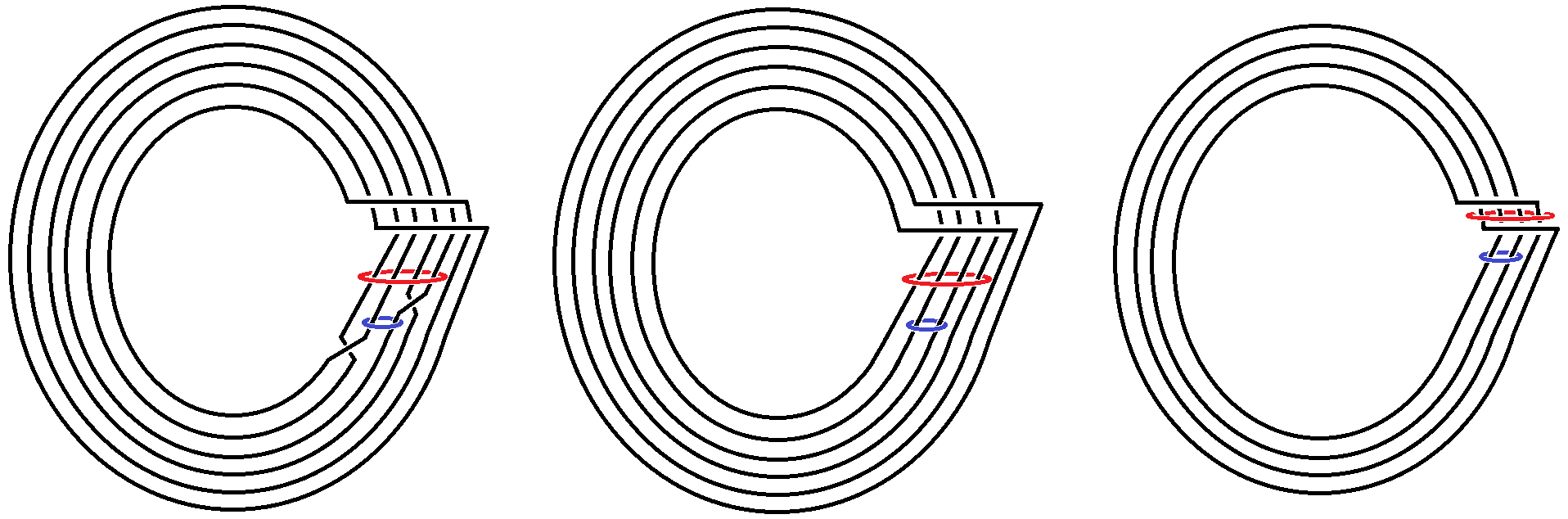} 
  %% Put the caption after the figure
  \caption{ }
  %% Put the label at the very end -- otherwise the latex compiler can have problems. 
  \label{Fig:S3}
\end{figure}

The right single crossing of $L$ goes up around the braid closure to be cancelled with the last crossing $\sigma_5$ of $L$. The left single crossing of $L$ goes down around the braid closure to get cancelled with the first crossing $\sigma_1$ of $L$. Result is shown in the middle of Figure~\ref{Fig:S3}. Then, if we push the red circle upwards, we can do some Markov moves to obtain the equivalent link in the right of Figure~\ref{Fig:S3}, where it is formed by the torus link $T(4, 2)$, $J_1$, and $J_2$. 
Now, if we do $(1/s)$-surgery on $J_1,$ we transform $T(4, 2)$ to $T(4, 6 + 4(s - 1)).$ Finally, after doing one negative half twist followed by a trivial Dehn filling on $J_2,$ $T(4, 6 + 4(s - 1))$ is replaced by the $K$-knot $K((4, 5 + 4(s - 1)), (3, 1))$.
\end{proof}

\section{Positive unexpected essential tori}

In this section, we prove theorem~\ref{answerMorimoto}. 

\begin{lemma} \label{lemma4}
The $K$-knot $K((4, 5 + 4(s - 1)), (3, 1))$ is the $T(2, 5 + 4(s - 1))$-cable knot on the torus knot $T(2, 3 + 2(s - 1))$ for $s > 0$.
\end{lemma}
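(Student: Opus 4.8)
Write $m = 5 + 4(s-1) = 4s+1$, so that $K((4,5+4(s-1)),(3,1))$ is the closure of the four-strand braid $\beta = (\sigma_1\sigma_2\sigma_3)^{m}(\sigma_1\sigma_2)$. My plan is to exhibit the cabling torus directly inside this braid by bundling the four strands into two parallel tubes, each containing two strands, and showing that at the level of tubes the braid closes to the companion $T(2,3+2(s-1))$ while each tube carries the $T(2,m)$ pattern.

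First I would separate off the full twists. Since $(\sigma_1\sigma_2\sigma_3)^4$ is the full twist $\Delta^2$ on four strands, I can write $\beta = \Delta^{2s}(\sigma_1\sigma_2\sigma_3)(\sigma_1\sigma_2)$. Grouping the strands into the pairs $\{1,2\}$ and $\{3,4\}$, the classical cabling formula for the full twist expresses $\Delta^{2s}$ as the $2$-cable of the full twist $\sigma_1^{2s}$ on two strands, composed with $s$ internal full twists $\sigma_1^{2}$ and $\sigma_3^{2}$ inside each pair. A short check of permutations and exponent sums shows that the remaining tail $(\sigma_1\sigma_2\sigma_3)(\sigma_1\sigma_2)$ contributes exactly one further crossing of the two tubes together with one internal crossing. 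Collapsing each tube to its core therefore turns $\beta$ into $\sigma_1^{2s+1}$, whose closure is the torus knot $T(2,2s+1)=T(2,3+2(s-1))$; this core is the companion.

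Next I would identify the pattern. Having realized the two tubes as a neighbourhood $V$ of the companion, the knot lies on $\bdy V$ and winds twice longitudinally, so it is a $(2,p)$-curve for some $p$, that is, the $T(2,p)$-cable. To pin down $p$ I would compute the linking number of the two strands running inside the tubes, adding the internal half-twists produced above (the $2s$ from the full twists and the one from the tail) to the framing that the cabling of the tube-crossings induces from the companion, and check that the total slope equals $m = 4s+1$ in the convention of Theorem~\ref{Lee1}. This framing reconciliation is the main obstacle: I must measure $p$ against the surface framing of $V$ rather than the blackboard framing of the braid diagram, since the two differ by the writhe, and it is precisely this correction that supplies the extra $2s$ twists needed to land the answer on $4s+1$ rather than on the naive internal count.

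Finally, since the pattern winds twice and the companion $T(2,3+2(s-1))$ is a nontrivial knot for $s>0$, the cabling torus $\bdy V$ is essential and the cable description is genuine. As a cross-check I would also run the computation in the six-strand model $K((6,2),(4,3+4(s-1)))$ provided by Lemma~\ref{lemma3}, or reprove the statement in the surgery-and-half-twist style used there, presenting the knot via a $(1/s)$-surgery on an unknot encircling the strands so that the companion torus is visible before any twisting and is manifestly preserved by it.
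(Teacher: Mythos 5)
Your overall strategy is the same as the paper's: the paper also exhibits the cabling solid torus directly in the braid diagram, as a tube pushed along two strands of $K((4,5+4(s-1)),(3,1))$, reading the companion off the path of the tube and the pattern off the strands inside it (Figure~\ref{Fig:S4} for $s=1$, then adding $(\sigma_1\sigma_2\sigma_3)^{4(s-1)}$, under which the tube gains $\sigma_1^{2(s-1)}$ and its interior gains $\sigma_1^{4(s-1)}$). Your algebraic implementation of the first three steps is correct: $(\sigma_1\sigma_2\sigma_3)^4$ is the full twist $\Delta^2$; the full twist $\Delta^{2s}$ equals the blackboard $2$-cable of $\sigma_1^{2s}$ composed with the internal twists $\sigma_1^{2s}\sigma_3^{2s}$; the braid relations give $\sigma_1\sigma_2\sigma_3\sigma_1\sigma_2=(\sigma_2\sigma_1\sigma_3\sigma_2)\sigma_3$, i.e.\ one fat crossing plus one internal crossing; and collapsing tubes to cores gives $\sigma_1^{2s+1}$, whose closure is the companion $T(2,2s+1)$.

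The gap is in your slope step, and it is twofold. First, the internal count is wrong: the full twists deposit $2s$ half-twists in \emph{each} of the two tubes, and the tail deposits one more; since the fat braid $\sigma_1^{2s+1}$ has odd exponent, the two tubes close up into a single solid torus and all internal crossings concatenate into the single pattern braid $\sigma^{4s}\cdot\sigma=\sigma^{4s+1}$, not $\sigma^{2s+1}$. This already gives the pattern $T(2,5+4(s-1))$ asserted by the lemma, with no framing correction at all --- which is exactly how the paper concludes, and it pins down the convention the paper is using: the pattern is described by the closure of the internal braid, i.e.\ the slope is measured in the tube's blackboard framing. Second, the ``framing reconciliation'' you propose goes in the wrong direction. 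If you genuinely measure the slope against the Seifert framing of the companion (the convention of Theorem~\ref{Lee1}), then, since the companion diagram $\sigma_1^{2s+1}$ has writhe $2s+1$ and the pattern has winding number $2$, the slope becomes $(4s+1)+2(2s+1)=8s+3$, not $4s+1$. You can confirm this with Bennequin's theorem: the positive braid $(\sigma_1\sigma_2\sigma_3)^{4s+1}\sigma_1\sigma_2$ has genus $(12s+5-4+1)/2=6s+1$, which matches the genus $2s+(8s+2)/2=6s+1$ of the $(2,8s+3)$-cable of $T(2,2s+1)$, and not the genus $4s$ of the $(2,4s+1)$-cable. So your plan to ``check that the total slope equals $4s+1$ in the convention of Theorem~\ref{Lee1}'' cannot be carried out as described; your sketch lands on $4s+1$ only because the miscount $2s+1$ and the spurious $+2s$ correction happen to cancel. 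The repair is to use the correct count $4s+1$ and to state the conclusion in the same internal-braid (tube-framing) sense as the paper's proof, rather than against the Seifert framing.
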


\begin{proof}Start with the case $s = 1$. There is a tube enclosing the first 2 strands of $K((4, 5), (3, 1))$ at the top of its braid. Note if we push this tube following these 2 strands without intersecting them, this tube forms a knotted torus in the exterior of $K((4, 5), (3, 1))$ encircling 2 strands of this $K$-knot. Observe in Figure~\ref{Fig:S4} that this tube forms the torus knot $T(2, 3)$ and the knot inside of it is the torus knot $T(2,5)$.

\begin{figure}
\includegraphics[scale=0.60]{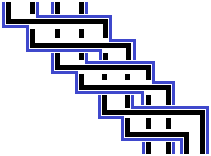} 
  %% Put the caption after the figure
  \caption{ }
  %% Put the label at the very end -- otherwise the latex compiler can have problems. 
  \label{Fig:S4}
\end{figure}

If $s  > 1$, we add the braid $(\sigma_1 \sigma_2 \sigma_3)^{4(s - 1)}$ to $K((4, 5), (3, 1))$ above the part $(\sigma_1 \sigma_2 \sigma_3)(\sigma_1 \sigma_2)$ and below the part $(\sigma_1 \sigma_2 \sigma_3)^4$ to get $K((4, 5 + 4(s - 1)), (3, 1))$. Let $T$ be the tube following the same path as the last case. In the part $(\sigma_1 \sigma_2 \sigma_3)^{4(s - 1)}$, $T$ has the form $\sigma_1^{2(s - 1)}$ and the part of $K((4, 5 + 4(s - 1)), (3, 1))$ within of $T$ is given by the braid $\sigma_1^{4(s - 1)}$. Therefore, the knotted torus $T$ is knotted in the form of the torus knot $T(2, 3 + 2(s - 1))$ with the torus knot $T(2, 5 + 4(s - 1))$ inside it. 
\end{proof}

\begin{named}{Theorem~\ref{answerMorimoto}}
For any integer $s> 0$, the twisted torus knot $$T(5 + 4(s - 1), 4; 2, 1)$$ is the $T(2, 5 + 4(s - 1))$-cable knot on the torus knot $$T(2, 3 + 2(s - 1)).$$
\end{named}

\begin{proof}It is known that the twisted torus knot $T(5 + 4(s - 1), 4; 2, 1)$ is equivalent to the twisted torus knot $T(4, 5 + 4(s - 1); 2, 1)$ \cite{Thesis}. From the proposition~\ref{proposition1}, the last twisted torus knot is equivalent to the $K$-knot $K((6, 2), (4, 3 + 4(s - 1)))$, and from lemma~\ref{lemma3} this $K$-knot is the same as the $K$-knot $K((4, 5 + 4(s - 1)), (3, 1))$. Finally, lemma~\ref{lemma4} says the $K$-knot $K((4, 5 + 4(s - 1)), (3, 1))$ is the $T(2, 5 + 4(s - 1))$-cable knot on the torus knot $T(2, 3 + 2(s - 1))$.
\end{proof}

\begin{corollary} 
The $K$-knots $K((5 + 4(s - 1), 4), (2, 2))$ and  $K((4, 5 + 4(s - 1)), (2, 2))$ are the $T(2, 5 + 4(s - 1))$-cable knot on the torus knot $T(2, 3 + 2(s - 1))$ for $s > 0.$
\end{corollary}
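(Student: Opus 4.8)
The plan is to reduce the corollary directly to Theorem~\ref{answerMorimoto} using the dictionary between $K$-knots and twisted torus knots recorded in the introduction, namely that $T(p, q; r, s)$ equals the $K$-knot $K((p, q), (r, rs))$ for $s > 0$. Both $K$-knots in the statement arise as the one-full-twist twisted torus knots whose cable structure was identified in Theorem~\ref{answerMorimoto}, so the work is essentially one of translating notation.

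First I would translate the parameters. Taking $r = 2$ and a single full twist, so that $rs = 2$, the dictionary gives
$$K((5 + 4(s - 1), 4), (2, 2)) = T(5 + 4(s - 1), 4; 2, 1)$$
and likewise
$$K((4, 5 + 4(s - 1)), (2, 2)) = T(4, 5 + 4(s - 1); 2, 1).$$
Theorem~\ref{answerMorimoto} already asserts that $T(5 + 4(s - 1), 4; 2, 1)$ is the $T(2, 5 + 4(s - 1))$-cable knot on the torus knot $T(2, 3 + 2(s - 1))$, which settles the first of the two $K$-knots at once.

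For the second $K$-knot I would invoke the symmetry $T(p, q; r, s) = T(q, p; r, s)$ from Dean's thesis \cite{Thesis}, which is precisely the first reduction already used inside the proof of Theorem~\ref{answerMorimoto}. This identifies $T(4, 5 + 4(s - 1); 2, 1)$ with $T(5 + 4(s - 1), 4; 2, 1)$, so the second $K$-knot is the same cable knot as the first, completing the argument.

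There is essentially no obstacle here beyond careful bookkeeping of parameters: the entire content is contained in Theorem~\ref{answerMorimoto} together with the standard symmetry of twisted torus knots, and the corollary merely repackages these facts in the $K$-knot notation. The only point that genuinely requires checking is that a single full twist on $r = 2$ strands produces the exponent $rs = 2$ appearing as the second pair $(2, 2)$ in each $K$-knot, so that the dictionary applies verbatim.
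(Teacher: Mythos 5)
Your proposal is correct and is essentially the paper's own argument: the paper proves this corollary with the single line ``it follows immediately from the last theorem,'' and the details you supply (the dictionary $T(p,q;r,s)=K((p,q),(r,rs))$ with $r=2$, $s=1$, plus Dean's symmetry $T(p,q;r,s)=T(q,p;r,s)$, which is exactly the first step in the paper's proof of Theorem~\ref{answerMorimoto}) are precisely the intended unpacking of that line.
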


\begin{proof}It follows immediately from the last theorem.
\end{proof}

The preprint \cite{MorimotoEssential}(see http://tunnel-knot.sakura.ne.jp/TKSML.pdf) has been cited several times by many authors. Theorem 1.1 of it claims that a twisted torus knot $T(p, q; r, s)$ with parameters $(p, q; 2, s)$ contains no closed essential surfaces in its exterior. However, from the last theorem, all twisted torus knots of the form $T(5 + 4(s - 1), 4; 2, 1)$ with $s > 0$ are cable knots, so each has an essential torus in its exterior. Therefore, they are counterexamples to theorem 1.1 of \cite{MorimotoEssential}. 

After testing a lot of examples of twisted torus knots on SnaPpy, we are able to conjecture the following.

\begin{conjecture} 
Let $r \neq 1$ be not a multiple of $q$. Then, all twisted torus
knots $T(p, q; r, s)$ with $s = 1$ which are satellite knots are the cable knots of theorem~\ref{answerMorimoto}.
\end{conjecture}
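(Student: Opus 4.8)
The plan is to recast the statement as a classification of essential tori in the exteriors of the positive-braid knots $T(p,q;r,1)=K((p,q),(r,r))$, in the one regime that the cited hyperbolicity theorems of \cite{LeeTorusknotsobtained} and \cite{hyperbolicity} leave untouched. Since $s=1$ keeps the defining braid positive, each such knot is fibred, and therefore so is any companion; this is the structural input that fails for $s=-1$ (where theorem~\ref{theorem5} already produces satellites with more complicated, non-cable companions), and it is what makes the $s=1$ case rigid. I would first trim the parameter range: the hypothesis that $r$ is not a multiple of $q$ removes the cable knots of theorem~\ref{Lee1} (the $r=kq$ family), and the symmetry $T(p,q;r,1)\cong T(q,p;r,1)$ of \cite{Thesis}, combined with proposition~\ref{proposition1}, lets me put every remaining knot into a normalized $K$-knot form in which the target family of theorem~\ref{answerMorimoto} appears exactly as the images of $K((6,2),(4,3+4(s-1)))$.

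The core step is a normal-form result: any essential torus $T$ in the exterior can be isotoped so that it is \emph{braided}, appearing as a tube following some number $m>1$ of adjacent parallel strands of the braid, precisely as in the construction of lemma~\ref{lemma4}. Granting this, the companion read off from $T$ is the torus knot carried by those $m$ strands and the pattern inside $T$ is the $K$-knot determined by the restriction of the braid word to them, with winding number $m$. The problem then becomes combinatorial: using the half-twist moves introduced in definition~\ref{D1} together with the crossing cancellations of lemma~\ref{lemma3} and Markov moves, one determines for which parameters the tube on $m$ strands is simultaneously knotted and carries a nontrivial pattern. I expect the arithmetic to collapse to $m=2$, companion $T(2,3+2(s-1))$, and pattern $T(2,5+4(s-1))$, forcing $q=4$, $r=2$ and $p\equiv 1\pmod 4$ after undoing the normalization, which is exactly $T(5+4(s-1),4;2,1)$.

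The principal obstacle is the normal-form step itself, because no structure theorem for essential tori in twisted torus knot exteriors is available when $s=1$; this is precisely why the statement is only a conjecture, and it cannot be borrowed from \cite{cable} or \cite{hyperbolicity}, which cover $r=kq$ or $|s|\ge 2$. I would try to establish it by combining fibredness with the compatibility of the fibration with the JSJ decomposition cut out by $T$ (so that both the companion exterior and the pattern region inherit fibrations), and then arguing that $T$ can be isotoped to meet each fibre in a standard way, hence to a braided position with respect to the braid axis; an incompressibility argument tailored to positive-braid fibre surfaces, perhaps via Bennequin-type genus bounds, should then constrain how $T$ can wind around the axis. The delicate remaining point is excluding companions of winding number $m\ge 3$: a tube following three or more strands can close up to a nontrivial torus knot, and one must show that for $s=1$ the induced pattern is then always trivial or compressible, so that such a torus is inessential. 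As a guide to the bookkeeping and as a check on small cases, I would run the same SnapPy verification used to formulate the conjecture.
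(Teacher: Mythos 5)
There is no proof to compare against: the statement you were given is a \emph{conjecture} in the paper, which the author explicitly formulates only ``after testing a lot of examples of twisted torus knots on SnapPy.'' The paper supplies no argument for it, and your proposal does not supply one either. You have written a research plan whose decisive step --- the ``normal-form'' claim that every essential torus in the exterior of $T(p,q;r,1)$ can be isotoped into braided position, i.e.\ realized as a tube following $m$ adjacent strands of the positive braid as in lemma~\ref{lemma4} --- is exactly the content of the conjecture, and you acknowledge yourself that it ``cannot be borrowed'' from \cite{cable} or \cite{hyperbolicity} and that you do not know how to prove it. Everything that surrounds it (removing the $r=kq$ cables of theorem~\ref{Lee1}, the symmetry $T(p,q;r,1)\cong T(q,p;r,1)$ from \cite{Thesis}, reading off companion and pattern from a tube) is the routine part; once a torus is assumed braided, identifying it is essentially lemma~\ref{lemma4} again. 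So the proposal has a genuine, self-identified gap at its core, and the concluding claim that ``the arithmetic collapses to $m=2$, forcing $q=4$, $r=2$, $p\equiv 1 \pmod 4$'' is an expectation, not a derivation.

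Two further cautions about the route you sketch. First, the fibredness input is weaker than you suggest: positivity of the braid does make $T(p,q;r,1)$ fibred, and a fibred satellite does force fibred companion and pattern (with nonzero winding number), but this is only a necessary condition on the JSJ pieces; it does not position the torus relative to the braid axis, and plenty of fibred knots admit essential tori that are not tubes around strands of a given positive braid presentation. Second, the exclusion of winding number $m\geq 3$ is not a ``delicate remaining point'' but a second open problem of the same order of difficulty as the normal-form step: the paper's own negative-twist family in theorem~\ref{theorem5} shows that tubes around many strands can close up into genuinely non-cable companions, so ruling this out for $s=1$ requires an argument that sees the sign of the twisting, which nothing in your outline currently does.
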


If this conjecture is true, we will know all knot types of all twisted torus knots
$T(p, q; r, 1)$.

\section{Negative unexpected essential tori}

In this section, we show theorem~\ref{theorem5}.

Let $K$ be the $K$-knot
 $$K((p - 1, k), (ka, 1), \dots, (k(1 + b) + 1, 1), (kb, 1), \dots, (2, 1))$$
associated to the twisted torus knot $T(p, q; p - 1, - 1)$ in lemma~\ref{lemmaSymmetry}, where $p - 1 - 2(q - 1) - 1 = k,$ $p - 2 = ka$ and $q - 1 = kb$ with $a, b, k$ positive integers and $k > 1$. $K$ can be obtained from the link $T(p - 1, k)\cup J_1 \cup J_2 \cup J_3,$ where $T(p - 1, k)$ is the $(p - 1, k)$-torus link and $J_1, J_2, J_3$ is the green, red, and blue circle, respectively, encircling $ka, k(1 + b), kb$ strands of $T(p - 1, k)$ as in Figure~\ref{Fig:S16}, after doing one positive half twist on $J_1$, then one negative half twist on  $J_2$, followed by one positive half twist on $J_3$ and, finally, trivial Dehn fillings on $J_1, J_2, J_3$.

\begin{figure}
\includegraphics[scale=0.6]{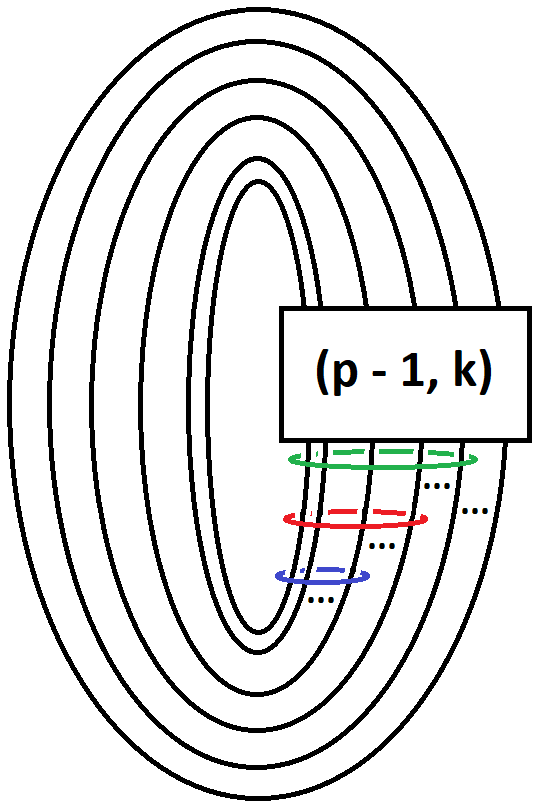} 
  %% Put the caption after the figure
  \caption{$J_1, J_2, J_3$ is the green, red and blue circle, respectively.}
  %% Put the label at the very end -- otherwise the latex compiler can have problems. 
  \label{Fig:S16}
\end{figure}

\begin{lemma}\label{lemmawithin}
Suppose the torus link $T(p - 1, k)$ is embedded on the surface of the trivial solid torus $V$. Then, $J_1, J_2, J_3$ can be isotoped to form the trivial torus knots $T(a, 1)$, $T(b + 1, 1)$, $T(b, 1),$ respectively, in $S^3$ to lie in $V$.
\end{lemma}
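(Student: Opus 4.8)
The plan is to understand the three encircling circles $J_1, J_2, J_3$ purely as curves determined by which longitudinal strands of the torus link they bound a disc through, and then track what happens to each one when $T(p-1,k)$ is viewed as sitting on the boundary of the solid torus $V$. The key structural fact I would exploit is that each $J_i$ is an unknotted circle in $S^3$ that encircles a block of consecutive strands at the top of the braid; concretely $J_1$ encircles $ka$ strands, $J_2$ encircles $k(1+b)$ strands, and $J_3$ encircles $kb$ strands. Since the $(p-1,k)$-torus link lies on $\partial V$ as $k$ parallel longitudinal curves (grouped into the $p-1$ meridional positions of the braid closure), a circle encircling a number of strands that is a multiple of $k$ can be isotoped so that it lies in $V$ and encircles a whole number of the $k$ longitudinal sheets. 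This is exactly the same manoeuvre used in the proof of lemma~\ref{toruslemma}, where an encircling curve is pushed off the braid and reinterpreted on the torus $\partial V$ after exchanging meridional and longitudinal roles.

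First I would record the arithmetic: $ka = p-2$ strands for $J_1$, $k(1+b)$ strands for $J_2$, and $kb = q-1$ strands for $J_3$. The point is that each of these strand-counts is an exact multiple of $k$, namely $a\cdot k$, $(b+1)\cdot k$, and $b\cdot k$ respectively. Next I would argue that because the torus link consists of $k$ longitudinal strands wrapping $p-1$ times, encircling $mk$ of the braid strands (for an integer $m$) is the same, after isotopy onto $\partial V$, as encircling $m$ of the $p-1$ meridional blocks, i.e. as a curve that winds $m$ times longitudinally inside $V$ and once meridionally. Such a curve is a trivial $(m,1)$-torus knot on $\partial V$, isotopic into $V$. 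Applying this with $m=a$, $m=b+1$, and $m=b$ yields precisely the trivial torus knots $T(a,1)$, $T(b+1,1)$, and $T(b,1)$ claimed for $J_1$, $J_2$, $J_3$.

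The step I expect to be the main obstacle is verifying that all three circles can be simultaneously isotoped into $V$, rather than just one at a time, and that the isotopy of one does not obstruct the others. Because the three blocks of strands are nested (the $kb$ strands of $J_3$ sit among the $k(1+b)$ strands of $J_2$, which sit among the $ka$ strands of $J_1$, as in Figure~\ref{Fig:S16}), I would handle them in order from the innermost to the outermost, checking at each stage that the curve can be slid onto $\partial V$ and pushed into $V$ while keeping the already-isotoped curves disjoint and unchanged. The nesting should make the slides compatible, since each larger block contains the smaller ones and the encircling discs can be taken to be nested parallel discs; but this compatibility, and the bookkeeping that each resulting curve is genuinely the unknotted $(m,1)$-curve on $\partial V$ with the stated $m$, is the content that requires care. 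The remaining assertions (that each $T(m,1)$ is trivial as a knot, and that the whole configuration lies in $V$) then follow immediately.
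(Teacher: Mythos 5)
Your first two steps reproduce the paper's core mechanism correctly: each pass around the longitude of $\partial V$ along the annulus $\partial V \setminus T(p-1,k)$ shifts an intersection point by exactly $k$ strand positions, so a circle encircling $mk$ strands unwinds, after $m$ passes, to a curve winding $m$ times longitudinally and once meridionally; applied with $m = a, b+1, b$ this gives the three claimed winding numbers. That is the same starting point as the paper. However, you then explicitly defer "the main obstacle" — the simultaneous isotopy — and that deferred bookkeeping is not a routine check: it is the actual content of the paper's proof, and your proposed way of organizing it (innermost to outermost, "keeping the already-isotoped curves disjoint and unchanged", with "nested parallel discs", each curve ending as "genuinely the unknotted $(m,1)$-curve on $\partial V$") cannot work as described.

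The obstruction is linking numbers. Initially $J_1, J_2, J_3$ bound disjoint discs stacked at different heights of the braid, so all pairwise linking numbers are zero, and any isotopy of the whole configuration preserves this. But the configuration your plan aims at — standard $(a,1)$-, $(b+1,1)$-, $(b,1)$-curves sitting on nested concentric tori in $V$ — has nonzero pairwise linking: for instance, a $(b+1,1)$-curve has linking number $1$ with the core of $V$, and a $(b,1)$-curve on a deeper concentric torus is homologous, in the complement of the outer curve, to $b$ times the core plus a meridian of the deeper torus (which bounds a disc missing the outer curve), so $\mathrm{lk}(J_2, J_3) = b \neq 0$. Hence no isotopy can put the three circles simultaneously into standard nested position: they are forced to clasp one another. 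This is exactly why the paper's proof drags the points in a staggered fashion ($P_1$ alone for $a-(b+1)$ passes, then $P_1, P_2$ together, then all three) and inserts a compensating full twist at each pass around the longitude "to keep the linking number of them equal to zero" (omitting it only on the final pass), producing the specific tangled configuration of Figure~\ref{Fig:S25}. That explicit tangled configuration is not incidental: the following lemma and the proof of Theorem~\ref{theorem5} invoke the circles being "tangled together like in the last lemma" and apply the inverse of this isotopy. So your proposal has a genuine gap: without tracking the forced clasping, the argument either asserts an impossible final position or leaves the final position undetermined, and in either case the lemma (as it is used) is not established.
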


\begin{proof}We can consider that each circle $J_1, J_2, J_3$ intersects $\partial V$ transversely in just two points, where $\partial V$ is the boundary of $V$. Let's denote $P_1, P_2, P_3$ the rightmost point of $\partial V \cap J_1$, $\partial V \cap J_2$, $\partial V \cap J_3$, respectively, and  $Q_1, Q_2, Q_3$ the leftmost point of $\partial V \cap J_1$, $\partial V \cap J_2$, $\partial V \cap J_3$, respectively. We will pull the points $P_1$, $P_2$, $P_3$ to coincide with the points $Q_1$, $Q_2$, $Q_3$ respectively, after this isopoty.

We start by pulling $P_1$ following the annulus $\partial V - T(p - 1, k)$ in the longitudinal direction clockwise. After passing once around the longitude, $P_1$ is $k$ strands apart from the current rightmost points of $\partial V \cap J_1$. After passing $a - (b + 1)$ times around the longitude, we pull $P_1$ a little bit more to let it to the right of $P_2$. Now, we pull $P_1$ and $P_2$ together following the annulus $\partial V - T(p - 1, k)$ in the longitudinal direction clockwise. After they pass one time around the longitude, we apply one full twist on $J_1$ and $J_2$ on the right direction to keep the linking number of them equal to zero. Then, we pull $P_1$ and $P_2$ a little bit more so that they get on the right of $P_3$. Now, we pull $P_1, P_2, P_3$  together still following the annulus $\partial V - T(p - 1, k)$ in the longitudinal direction clockwise and applying one full twist on the right direction every time they pass around the longitude to let the linking number of them equal to zero. But, since $P_1$ is on the right, $P_2$ is in the middle and $P_3$ is on the left in the final pass around the longitude, we don't twist them in this final pass. We can now connect $P_i$ to $Q_i$.
\begin{figure}
\includegraphics[scale=0.8]{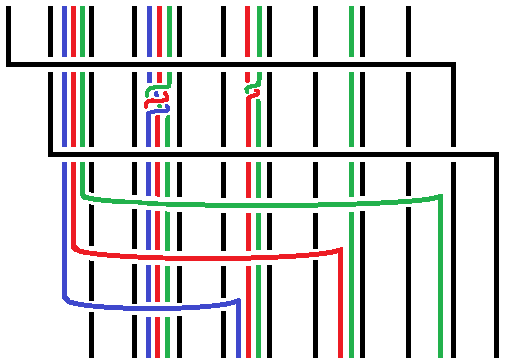} 
  %% Put the caption after the figure
  \caption{}
  %% Put the label at the very end -- otherwise the latex compiler can have problems. 
  \label{Fig:S25}
\end{figure}
Since $J_1, J_2, J_3$ passes $a, b + 1, b$ times around the longitude, respectively, they have the forms of the trivial torus knots $T(a, 1),$ $T(b + 1, 1),$ $T(b, 1)$, respectively. To complete the proof, we simply push them simultaneously to the inside of $V$.

Figure~\ref{Fig:S25} illustrates this procedure for when $p = 11, k = 2, a = 4,$ and $b = 2$.
\end{proof}

\begin{lemma}Let $C$ be the trivial torus knot $T(a , 1)$. Consider
unknotted circles $J_1, J_2, J_3$ encircling $a, b + 1, b$, respectively, strands of $C$ as in Figure~\ref{Fig:S17}, where $C, J_1, J_2, J_3$ is  the black, green, red, blue curve, respectively.
Then, there is an isopoty exchanging $C$ and $J_1$ so
that $J_1, J_2, J_3$ get tangled together like in the last lemma.
\end{lemma}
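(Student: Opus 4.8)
The plan is to reduce the statement to a symmetry of a torus link together with the pulling-and-twisting bookkeeping already carried out in Lemma~\ref{lemmawithin}. First I would place $C = T(a,1)$ on the boundary of a trivial solid torus $U$ so that it winds $a$ times in the longitudinal direction and once in the meridional direction; the $a$ strands that $J_1$ encircles are then precisely the $a$ points in which $C$ meets a meridian disc of $U$, so I may take $J_1$ to be the boundary of that disc. In the complementary solid torus $U^{*}=S^{3}\setminus U$ both $C$ and $J_1$ become curves running once longitudinally, parallel on $\partial U^{*}$ and with linking number $a$; hence $C\cup J_1$ is the $(2,2a)$-torus link. This link carries the obvious involution exchanging its two components, and I would use it to produce an ambient isotopy $\Phi$ of $S^{3}$ with $\Phi(C)$ in the former position of $J_1$ and $\Phi(J_1)$ in the former position of $C$. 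After $\Phi$ the curve $J_1$ winds $a$ times longitudinally and so is presented as the trivial torus knot $T(a,1)$, taking over the role that $C$ played before, exactly as in the meridian--longitude interchanges of Lemma~\ref{toruslemma} and Proposition~\ref{proposition1}.

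Next I would follow $J_2$ and $J_3$ through $\Phi$. Because they encircle $b+1$ and $b$ of the $a$ strands of $C$ (see Figure~\ref{Fig:S17}), before the move they are partial meridians sitting at prescribed longitudinal positions; the component-exchange $\Phi$ drags each of them around the core once for every strand it encloses. As in the proof of Lemma~\ref{lemmawithin}, each complete trip around the longitude forces a compensating full twist so that the pairwise linking numbers of the three circles stay zero, with one such twist omitted on the last pass. This is meant to reproduce, for $J_1,J_2,J_3$, the trivial torus knots $T(a,1)$, $T(b+1,1)$, $T(b,1)$ wound and twisted together exactly as at the end of Lemma~\ref{lemmawithin} (compare Figure~\ref{Fig:S25}).

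Finally I would verify that the two configurations agree on the nose: the longitudinal winding numbers $a,\,b+1,\,b$ match those in Lemma~\ref{lemmawithin}, and the twisting prescription---the number of full twists and the omission on the final pass---is identical, so the resulting link $J_1\cup J_2\cup J_3$ is isotopic to the one produced there. The interchange of $C$ and $J_1$ is routine once $J_1$ is recognised as the meridian enclosing all $a$ strands and $C\cup J_1$ as a $(2,2a)$-torus link; I expect the main obstacle to be the second step, namely controlling the partial meridians $J_2$ and $J_3$ through $\Phi$ and pinning down the number and signs of the compensating full twists so that every mutual linking number is preserved. This is the delicate bookkeeping, and I would carry it out by transplanting the pulling-and-twisting argument of Lemma~\ref{lemmawithin} and matching it against the given diagram.
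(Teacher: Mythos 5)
Your first step is correct as far as it goes: placing $C=T(a,1)$ on the boundary of a trivial solid torus with $J_1$ the boundary of a meridian disc, the sublink $C\cup J_1$ is indeed the $(2,2a)$-torus link (each curve is isotopic, inside the complementary solid torus, to its core, and the two can be made parallel $(1,a)$-curves on concentric tori), so \emph{some} ambient isotopy exchanging $C$ and $J_1$ exists. The gap is in your second step, and it is not merely deferred bookkeeping: an exchange isotopy $\Phi$ is determined only up to composition with ambient isotopies preserving $C\cup J_1$ setwise, and such self-maps can move curves in the complement of $C\cup J_1$ to genuinely different positions (for instance by twisting in the region between the two components or in the complementary solid torus). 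Hence your claim that ``the component-exchange $\Phi$ drags each of them around the core once for every strand it encloses'' is not a consequence of $\Phi$ exchanging the two components; it is a property that must be engineered into one particular choice of $\Phi$ and then verified, and that verification is precisely the assertion of the lemma. Your proposal acknowledges this is the main obstacle and leaves it to be ``carried out by transplanting the pulling-and-twisting argument,'' so the essential content remains unproved.

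The paper's proof avoids this trap by reversing the order of the two moves, which makes the tracking problem vanish. Since $a = 1\cdot a$, $b+1 = 1\cdot(b+1)$, and $b = 1\cdot b$, the configuration of Figure~\ref{Fig:S17} satisfies the hypotheses of Lemma~\ref{lemmawithin} with $k=1$: the curve $C=T(a,1)$ sits on $\partial V$ and the three circles encircle (multiples of $1$) strands. Running that lemma's explicit pulling procedure moves $J_1, J_2, J_3$ \emph{simultaneously} into the interior of $V$ as $T(a,1)$, $T(b+1,1)$, $T(b,1)$, tangled together exactly as required --- by construction, with nothing left to track. Only afterwards is $C$ moved: it still lies on $\partial V$ as an $(a,1)$-curve, so inside the complementary solid torus $S^3\setminus \mathrm{int}(V)$ it wraps once longitudinally and can be isotoped, entirely within that complementary solid torus, to a meridian of $\partial V$, i.e.\ to an unknotted circle encircling $V$ and hence encircling the $a$ strands of the new $T(a,1)$. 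Because this final isotopy takes place in a region disjoint from $J_1\cup J_2\cup J_3$, it cannot disturb their configuration. If you wish to salvage your outline, you must define $\Phi$ to be exactly this composite --- at which point the $(2,2a)$-torus-link symmetry is no longer doing any work in the proof.
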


\begin{proof}Since we can write $a = 1a$, $b + 1 = 1(b + 1)$, $b = 1b$, using the same procedure of the proof of the last
lemma, we can deform $J_1, J_2, J_3$ to $T(a, 1), T(b + 1, 1), T(b, 1)$,
respectively, to the interior of $V$, where $V$ is the trivial solid torus which $C$ is on its surface, tangled
together like in the last lemma. To complete the proof, we deform $C$ to the unknot wrapping
around $V$ and isotoped to a meridian of $\partial V$.
\end{proof}

\begin{figure}
\includegraphics[scale=0.75]{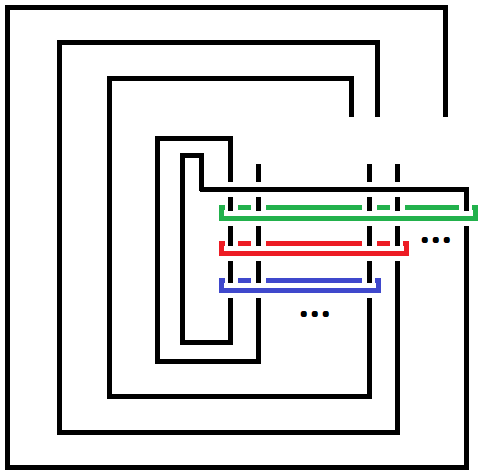} 
  %% Put the caption after the figure
  \caption{}
  %% Put the label at the very end -- otherwise the latex compiler can have problems. 
  \label{Fig:S17}
\end{figure}

\begin{lemma} Let $a - 1$ be even of the form $2c$.
The $K$-knot
$$K((p - 1, k), (ka, 1), \dots, (k(1 + b) + 1, 1), (kb, 1), \dots, (2, 1)).$$
associated with the twisted torus knot $T(p, q; p - 1, - 1)$ in lemma~\ref{lemmaSymmetry} is satellite with companion the $K$-knot $$K((a, 2), (a - 1, 1), \dots, (b + 2, 1), (b, 1), (b-1, 1), \dots, (2, 1))$$ and pattern the torus knot $$T(k, p - 1 + kc).$$ Furthermore, $a - 1$ can not be odd.
\end{lemma}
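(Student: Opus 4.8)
The plan is to recognise $K$ as the $k$--fold cable of $K'$ by exhibiting the entire construction of $K$ as the ``$k$--cabled'' shadow of an identical construction producing $K'$. First I would record the parallel: exactly as $K$ arises from $T(p-1,k)\cup J_1\cup J_2\cup J_3$ by a positive, a negative, and a positive half twist on circles encircling $ka$, $k(b+1)$, $kb$ strands (the paragraph preceding Lemma~\ref{lemmawithin}), the companion $K'=K((a,2),(a-1,1),\dots,(b+2,1),(b,1),\dots,(2,1))$ arises from $T(a,2)\cup J_1'\cup J_2'\cup J_3'$ by the same three half twists on circles encircling $a$, $b+1$, $b$ strands. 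One checks this last assertion by the cancellation computation of Lemma~\ref{lemmaSymmetry}: the positive half twist contributes $(a,1)(a-1,1)\cdots(2,1)$, the negative one cancels $(b+1,1)\cdots(2,1)$, and the final positive one restores $(b,1)\cdots(2,1)$, leaving precisely the braid word of $K'$. The circle sizes for $K$ are thus $k$ times those for $K'$, and the base $T(p-1,k)=T(ka+1,k)$ plays the role of the $k$--strand cable of $T(a,2)$, up to the re--coring supplied below.

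Next I would produce the companion solid torus. Embedding $T(p-1,k)$ on $\partial V$ and applying Lemma~\ref{lemmawithin}, the circles $J_1,J_2,J_3$ are isotoped into $V$ as the trivial torus knots $T(a,1)$, $T(b+1,1)$, $T(b,1)$. Applying the exchange of $C=T(a,1)$ with $J_1$ from the preceding lemma interchanges the wrapping of $T(p-1,k)$ with that of $J_1$, so that a neighbourhood $W$ of the resulting core curve is a solid torus inside which $K$ lies; moreover that exchange identifies the tangle of $J_1,J_2,J_3$ about the new core with the half--twist circle configuration defining $K'$, so the core of $W$ is exactly the knot $K'$. Since $K$ meets each meridian disc of $W$ in $k>1$ points and $K'$ is nontrivial, $\partial W$ is incompressible and knotted, exhibiting $K$ as a satellite with companion $K'$ and winding number $k$.

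To pin down the pattern I would use the cabling formula for the half--twist braid. Writing $\Delta_m$ for the positive half twist on $m$ strands and grouping the $ka$ strands encircled by $J_1$ into $a$ blocks of $k$, the $180^{\circ}$ rotation factors as the block--level half twist (the cable of $\Delta_a$, absorbed into $K'$) times one internal half twist $\Delta_k$ in each of the $a$ blocks; similarly for $J_2$ and $J_3$. Tallying the internal half twists block by block gives $+1$ on blocks $1,\dots,b$, then $0$ on block $b+1$, then $+1$ on blocks $b+2,\dots,a$, for a total of $a-1$; following the $k$--strand cable once around $K'$ these combine into $(a-1)/2=c$ full twists. Adding the twisting $p-1$ descending from the base $T(ka+1,k)$, the pattern is the torus knot $T(k,\,p-1+kc)$, as claimed.

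Finally, the parity statement is forced by this same tally: the $a-1$ internal half twists assemble into an integral number of full twists, hence into a genuine torus--knot pattern on $\partial W$, precisely when $a-1$ is even. If $a-1$ is odd a single unpaired half twist survives on the $k$ cabling strands, so the curve they trace on $\partial W$ is not a torus knot and $\partial W$ fails to be a companion torus; thus $K$ admits no such satellite structure, and $a-1$ cannot be odd. I expect the main obstacle to be the middle step: rigorously upgrading the diagrammatic ``$k$--cable'' coincidence into an honest companion torus, that is, verifying via Lemma~\ref{lemmawithin} and the exchange lemma that the core of $W$ is \emph{exactly} $K'$ with the correct framing, rather than merely a knot carrying the same block--level braid word.
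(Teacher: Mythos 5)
Your main satellite argument follows essentially the same route as the paper: isotope $J_1,J_2,J_3$ into $V$ via Lemma~\ref{lemmawithin}, exchange roles with the meridian circle $C$ so that the base torus link becomes the pattern $T(k,p-1)$ while $C$ becomes $T(a,1)$ with $J_1,J_2,J_3$ encircling $a$, $b+1$, $b$ of its strands, then track how each half twist simultaneously builds up the companion and adds braids to the pattern. Your block-by-block tally of $a-1$ internal half twists on $k$ strands, assembling into $c$ full twists, is exactly the paper's cancellation count $[(\sigma_1\cdots\sigma_{k-1})(\sigma_1\cdots\sigma_{k-2})\cdots(\sigma_1)]^{a-1}=(\sigma_1\cdots\sigma_{k-1})^{kc}$, so the pattern $T(k,p-1+kc)$ comes out the same way. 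One slip in your first paragraph: the companion's construction starts from $T(a,1)$ (the unknot presented as an $a$-strand braid closure), not from $T(a,2)$; your own accounting, in which the first positive half twist contributes $(a,1)(a-1,1)\cdots(2,1)$ and yields the leading $(a,2)$, already presupposes this.

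The genuine gap is your treatment of the final claim that $a-1$ cannot be odd. You argue that an odd number of internal half twists would leave a non-torus-knot pattern, so ``$\partial W$ fails to be a companion torus, thus $a-1$ cannot be odd.'' This is a non sequitur: if $a-1$ were odd, the only conclusion available from your argument is that this particular satellite description breaks down, not that such parameters cannot occur. The statement is an arithmetic fact about the parameter constraints inherited from Lemma~\ref{lemmaSymmetry}: from $k=p-2q$, $p=ka+2$, $q=kb+1$ one gets $k=ka+2-2kb-2=k(a-2b)$, hence $a=2b+1$ and $a-1=2b$ is forced to be even; equivalently, assuming $a-1=2c+1$ leads to $-1=2(c-b)$, a parity contradiction. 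This is how the paper disposes of the odd case, and it is the step your proposal is missing --- no geometric observation about the pattern curve on $\partial W$ can substitute for it, because the claim to be proved is about the integers $a,b,k,p,q$ themselves.
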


\begin{proof}From lemma~\ref{lemmawithin},  $J_1, J_2, J_3$ can be isotoped to form the trivial torus knots $T(a, 1)$, $T(b + 1, 1)$, $T(b, 1),$ respectively, in $S^3$ so that they lie in $V$, where $V$ is the trivial solid torus that has the $(p - 1, k)$-torus knot $T(p - 1, k)$ on its surface and $J_1, J_2, J_3$ is the green, red, and blue circle, respectively, encircling $ka, k(1 + b), kb$ strands of $T(p - 1, k)$ as shown in Figure~\ref{Fig:S16}.

Consider $C$ an unknotted circle encircling $V$ and isotoped to a meridian of $\partial V$.

We can deform $\partial V$ to the boundary of a tubular neighbourhood of $C$. After this deformation, the torus link $T(p - 1, k)$ becomes the torus link  $T(k, p - 1)$.

By applying the inverse isotopy of the last lemma, $C$ becomes the trivial torus knot $T(a, 1)$
and $J_1, J_2, J_3$ become unknot circles enclosing $a, b + 1, b$ strands, respectively, of
$C$. Therefore, by pushing $T(k, p - 1)$ into $C$, we end with $T(k, p - 1)$ inside an unknotted torus $C$ in the form of the trivial torus knot $T(a, 1)$ and the unknotted circles $J_1, J_2, J_3$ are encircling $a, 1 + b, b$ strands of $T(a, 1),$ respectively, as shown in Figure~\ref{Fig:S17}. 

After doing one positive half twist on $J_1$, $C$ gets the form of the $K$-link $$K((a, 2), (a - 1, 1), \dots, (2, 1)),$$ called $K_1$, and $T(k, p - 1)$ is adjusted by adding the braids 
$$[(\sigma_{1}\dots \sigma_{k - 1})(\sigma_{1}\dots \sigma_{k-2})\dots (2, 1)]^a.$$
Call $B_1$ this new knot.

Then, by doing one negative half twist on $J_2,$ we replace $K_1$ with $$K((a, 2), (a - 1, 1), \dots, (b + 2, 1)),$$ called $K_2$, and add the braids $$[(\sigma_{k - 1}^{-1} \dots \sigma_{1}^{-1})^{-1}( \sigma_{k - 1}^{-1} \dots \sigma_{2}^{-1})\dots (\sigma_{k - 1}^{-1})]^{b+1}$$ to $B_1$, obtaining a new knot called $B_2$.

Finally, we do one positive half twist on $J_3$, replacing $K_2$ with $$K((a, 2), (a - 1, 1), \dots, (b + 2, 1), (b, 1), (b-1, 1), \dots, (\sigma_{1}))$$ and adding the braids $$[(\sigma_{1}\dots \sigma_{k-1})(\sigma_{1}\dots \sigma_{k-2})\dots (\sigma_{1})]^b$$ to $B_2$, getting a new knot called $B_3$. 

Since the negative braids are cancelled with some positive braids, we only need to add the braids $$[(\sigma_{1}\dots \sigma_{k-1})(\sigma_{1}\dots \sigma_{k-2})\dots (\sigma_{1})]^{a - 1}$$ to $T(k, p - 1)$ to get $B_3$ after all these positive or negative half twists. If $a - 1$ is even of the form $2c,$ then  $$[(\sigma_{1}\dots \sigma_{k - 1})(\sigma_{1}\dots \sigma_{k-2})\dots (\sigma_{1})]^{a - 1} = (\sigma_{1}\dots \sigma_{k-1})^{kc},$$ and $B_3$ is the torus knot $T(k, p - 1 + kc).$ 

If $a - 1$ is odd of the form $2c + 1$, then $p = 2+ka = 2 + k(2c+2)$ and $$k = p - 2q = 2+k(2c+2)-2(kb+1) = 2 +2kc+2k-2kb-2 = 2kc+2k-2kb,$$ which implies the contradiction $-1 = 2(c-b)$.\end{proof}

\begin{named}{Theorem~\ref{theorem5}}
Let $p = ka+2$ and $q = kb+1$ for integers $k>1$, $b>0$ and $a = 2b + 1$. Then, the twisted torus knot $$T(p, q; p - 1, - 1)$$ is the $T(k, p - 1 + kb)$-cable knot on the $K$-knot $$K((a, 2), (a - 1, 1), \dots, (b + 2, 1), (b, 1), (b-1, 1), \dots, (2, 1)).$$ 
\end{named}

\begin{proof}By lemma~\ref{lemmaSymmetry}, the twisted torus knot $T(p, q; p - 1, - 1)$ is the $K$-knot
$$K((p - 1, k), (ka, 1), \dots, (k(1 + b) + 1, 1), (kb, 1), \dots, (2, 1)).$$
The last lemma says that this $K$-knot is the $T(k, p - 1 + kb)$-cable knot on the $K$-knot $$K((a, 2), (a - 1, 1), \dots, (b + 2, 1), (b, 1), (b-1, 1), \dots, (2, 1)). \qedhere $$
\end{proof} 

The satellite twisted torus knots $T(p, q; r, s)$ of theorem~\ref{Morimoto1} has $p = ((e + 1)(k_1 + k_2 - 1) + 1)x_1 + (e + 1)x_2$ and 
$r = ((e + 1)(k_1 + k_2 - 1) - k_1 + 2)x_1 + ex_2$. So, $p - r =  (k_1-1)x_1 + x_2.$ Since $k_1 > 1$ and $x_1, x_2 > 0$, thus $p - r > 1.$ We conclude that all satellite twisted torus knots of theorem~\ref{theorem5} are different from those of theorem~\ref{Morimoto1}.

Now, we can answer yes to the question~\ref{questionMorimoto}, since all twisted torus knots of theorem~\ref{answerMorimoto} and theorem~\ref{theorem5} have essential tori in their exteriors because they are all satellites and are not in theorem~\ref{Lee1} or theorem~\ref{Morimoto1}.  
 
Now, we show corollary~\ref{answerMorimotoYamada}.
 
\begin{named}{Corollary~\ref{answerMorimotoYamada}}
There are integers $1< q < p$, $s=-1$, and prime numbers $r > 2$ such that the twisted torus knots $T(p, q; r, s)$ in theorem~\ref{theorem5} contain an essential torus in their exteriors.
\end{named}

\begin{proof}The twisted torus knots $T(8, 3; 7, -1), $ $ T(12, 5; 11, -1),$ $T(20, 9; 19, -1),$ $T(24, 11; 23, -1),$ $T(32, 15; 31, -1)$ are all satellite knots, from the last theorem, where $k = 2$. As a consequence, each has an essential torus in its exterior.
\end{proof}

%% To make the bibliography:
\bibliographystyle{amsplain}  %% Uses AMS format for bibliography

%% Put all the bib entries in a file references.bib
\bibliography{Unexpected_essential_surfaces}

\providecommand{\bysame}{\leavevmode\hbox to3em{\hrulefill}\thinspace}
\providecommand{\MR}{\relax\ifhmode\unskip\space\fi MR }
% \MRhref is called by the amsart/book/proc definition of \MR.
\providecommand{\MRhref}[2]{%
  \href{http://www.ams.org/mathscinet-getitem?mr=#1}{#2}
}
\providecommand{\href}[2]{#2}
\begin{thebibliography}{10}

\bibitem{newtwis}
Joan Birman and Ilya Kofman, \emph{A new twist on {L}orenz links}, J. Topol.
  \textbf{2} (2009), no.~2, 227--248. \MR{2529294}

\bibitem{Bridge}
Richard~Sean Bowman, Scott Taylor, and Alexander Zupan, \emph{Bridge spectra of
  twisted torus knots}, Int. Math. Res. Not. IMRN (2015), no.~16, 7336--7356.
  \MR{3428964}

\bibitem{simplest}
Patrick~J. Callahan, John~C. Dean, and Jeffrey~R. Weeks, \emph{The simplest
  hyperbolic knots}, J. Knot Theory Ramifications \textbf{8} (1999), no.~3,
  279--297. \MR{1691433}

\bibitem{generalizedtwistedtoruslinks}
Abhijit Champanerkar, David Futer, Ilya Kofman, Walter Neumann, and Jessica~S.
  Purcell, \emph{Volume bounds for generalized twisted torus links}, Math. Res.
  Lett. \textbf{18} (2011), no.~6, 1097--1120. \MR{2915470}

\bibitem{nextsimplest}
Abhijit Champanerkar, Ilya Kofman, and Eric Patterson, \emph{The next simplest
  hyperbolic knots}, J. Knot Theory Ramifications \textbf{13} (2004), no.~7,
  965--987. \MR{2101238}

\bibitem{Thesis}
John~Charles Dean, \emph{Hyperbolic knots with small {S}eifert-fibered {D}ehn
  surgeries}, ProQuest LLC, Ann Arbor, MI, 1996, Thesis (Ph.D.)--The University
  of Texas at Austin. \MR{2694392}

\bibitem{cable}
Sangyop Lee, \emph{Twisted torus knots {$T(p,q;kq,s)$} are cable knots}, J.
  Knot Theory Ramifications \textbf{21} (2012), no.~1, 1250005, 4. \MR{2887898}

\bibitem{LeeTorusknotsobtained}
\bysame, \emph{Torus knots obtained by twisting torus knots}, Algebr. Geom.
  Topol. \textbf{15} (2015), no.~5, 2819--2838. \MR{3426694}

\bibitem{hyperbolicity}
\bysame, \emph{Satellite knots obtained by twisting torus knots: hyperbolicity
  of twisted torus knots}, Int. Math. Res. Not. IMRN (2018), no.~3, 785--815.
  \MR{3801447}

\bibitem{Composite}
\bysame, \emph{Composite knots obtained by twisting torus knots}, Int. Math.
  Res. Not. IMRN (2019), no.~18, 5744--5776. \MR{4012126}

\bibitem{Positively}
\bysame, \emph{Positively twisted torus knots which are torus knots}, J. Knot
  Theory Ramifications \textbf{28} (2019), no.~3, 1950023, 13. \MR{3938086}

\bibitem{Heegaard}
Yoav Moriah and Eric Sedgwick, \emph{Heegaard splittings of twisted torus
  knots}, Topology Appl. \textbf{156} (2009), no.~5, 885--896. \MR{2498921}

\bibitem{MorimotoEssential}
Kanji Morimoto, \emph{Essential surfaces in the exteriors of torus knots with
  twists on 2-strands. preprint http://tunnel-knot.sakura.ne.jp/tksml.pdf}.

\bibitem{tangle}
\bysame, \emph{On tangle decompositions of twisted torus knots}, J. Knot Theory
  Ramifications \textbf{22} (2013), no.~9, 1350049, 12. \MR{3105308}

\bibitem{Anote}
Kanji Morimoto and Yuichi Yamada, \emph{A note on essential tori in the
  exteriors of torus knots with twists}, Kobe J. Math. \textbf{26} (2009),
  no.~1-2, 29--34. \MR{2583175}

\bibitem{Thurston}
William~P. Thurston, \emph{Three-dimensional manifolds, {K}leinian groups and
  hyperbolic geometry}, Bull. Amer. Math. Soc. (N.S.) \textbf{6} (1982), no.~3,
  357--381. \MR{648524}

\bibitem{homology}
Faramarz Vafaee, \emph{On the knot {F}loer homology of twisted torus knots},
  Int. Math. Res. Not. IMRN (2015), no.~15, 6516--6537. \MR{3384486}

\end{thebibliography}

\end{document}